\documentclass[11pt]{amsart}
 \font\tenmsb=msbm10 scaled\magstep
1
\newfam\msbfam
   \textfont\msbfam=\tenmsb

\usepackage{amsmath}
\usepackage{amssymb}
\usepackage{amsrefs}
\usepackage{mathscinet}
\usepackage{amsfonts,mathrsfs, bbding}
\usepackage{epsfig}
\usepackage{amsthm}
\usepackage{bm}
\usepackage{latexsym}
\usepackage[latin1]{inputenc}
\usepackage{verbatim}
\usepackage{pb-diagram}
\usepackage[all]{xy}

\newtheorem{theorem}{Theorem}[section]

\newtheorem{lemma}[theorem]{Lemma}
\newtheorem{corollary}[theorem]{Corollary}
\newtheorem{proposition}[theorem]{Proposition}
\theoremstyle{definition}
\newtheorem{definition}[theorem]{Definition}
\newtheorem{example}[theorem]{Example}

\theoremstyle{remark}
\newtheorem{remark}[theorem]{Remark}

\newtheorem{observation}[theorem]{Observation}

\numberwithin{equation}{section}



\newcommand{\m}{\mathcal }

\def\limind{\mathop{\oalign{lim\cr
\hidewidth$\longrightarrow$\hidewidth\cr}}}
\DeclareMathAlphabet{\mathpzc}{OT1}{pzc}{m}{it}

\newcommand{\compcent}[1]{\vcenter{\hbox{$#1\circ$}}}

\newcommand{\comp}{\mathbin{\mathchoice
{\compcent\scriptstyle}{\compcent\scriptstyle}
{\compcent\scriptscriptstyle}{\compcent\scriptscriptstyle}}} 

\begin{document}

\title{Product systems arising from L\'evy processes}

\author{Remus Floricel}
\author{Peter Wadel}
\address{University of Regina, Department of Mathematics and Statistics, Regina, SK, Canada}
\email{Remus.Floricel@uregina.ca} 
\address{University of Waterloo, Faculty of Mathematics, Waterloo, ON, Canada}
\email{peter.wadel@uwaterloo.ca} 
 \subjclass[2020]
{Primary  46B09, 60G51, 60G15, 60B11}
\keywords{L\'evy process, Skellam-type infinite-dimensional compound Poisson process, product system of Hilbert spaces}
\thanks{The first author's research was partially funded by a Discovery Grant from NSERC, while the second author received support through an NSERC USRA}

\begin{abstract}
This paper investigates the structure of product systems of Hilbert spaces derived from Banach space-valued L\'evy processes. We establish conditions under which these product systems are completely spatial and show that Gaussian L\'evy processes with non-degenerate covariance always give rise to product systems of type I. Furthermore, we construct a continuum of non-isomorphic product systems of type \(\rm{II}\sb\infty\) from pure jump L\'evy processes. 
\end{abstract}

\maketitle
\section{Introduction}

Introduced by Arveson in his seminal work \cite{A2}, product systems of Hilbert spaces have become fundamental in the classification of \(E_0\)-semigroups \cite{Pow88}, with significant connections to various areas of operator algebras and both classical and quantum probability (see \cite{A7} for a comprehensive overview). From the early days of the theory, strong connections between product systems and stochastic processes, particularly L\'evy processes, have been observed \cite{Mey, Mar0, Tsirelson2002}, and these links continue to be explored today in various settings (see, for example, \cite{Tsirelson2004, Liebscher, Skeide, Sundar} and references therein).

In \cite{Mar0, Mar1}, Markiewicz showed that product systems arising from \(\mathbb{R}^n\)-valued L\'evy processes are completely spatial (type I). However, it has long been proposed (see, for example, \cite{Skeide}) that product systems arising from L\'evy processes with state spaces beyond \(\mathbb{R}^n\) could be of type II. We provide a positive answer to this within the framework of Banach space-valued L\'evy processes.

The paper is organized as follows: Section \ref{sec:background} provides the necessary background on L\'evy processes and product systems of Hilbert spaces. In Section \ref{sec:product_system}, we construct the product system associated with a Banach space-valued L\'evy process and prove key properties, including general conditions ensuring the complete spatiality with respect to exponential units of these product systems (see Theorem \ref{Th1}). We also consider a Hilbert space-valued compound Poisson process in Example \ref{exm1} to construct a non-exponential unit in the associated product system.

In Section \ref{sec:pureGauss}, for purely Gaussian L\'evy processes with non-degenerate covariance, we prove that every unit in the associated product system is equivalent to a standard exponential unit derived from the Cameron-Martin space of the process (Theorem \ref{main1}). Note that any such unit is a martingale (Proposition \ref{Lem1}). In particular, we establish that the product systems associated with such Gaussian L\'evy processes are completely spatial (Theorem \ref{theorem:completely_spatial}).

In Section  \ref{sec:examples}, we construct a family  \(\{\bm{X}^\lambda\}_{\lambda\in  [0,1]^\mathbb{N}}\) of Skellam-type infinite-dimensional compound Poisson processes and show that their product systems are of type \(\mathrm{II}\sb{\infty}\) (Theorem \ref{type}) and mutually non-isomorphic (Theorem \ref{last}). This is obtained through a detailed description and analysis of the units of the product systems.

We note at the end that, with very few exceptions, all the notations and conventions used in this article are standard.

\section{Background}\label{sec:background}

This section provides the necessary background on both Banach space-valued L\'evy processes and product systems of Hilbert spaces, which are relevant to this work. The standard references for these topics are \cite{Sato1, Applebaum} for L\'evy processes and \cite{A7, A2} for product systems.

\subsection{L\'evy Processes on Banach Spaces}

Let \((B, \|\cdot\|)\) be a real, separable Banach space. A \(B\)-valued L\'evy process is a stochastic process \(\bm{L} = \{L_t\}_{t \geq 0}\) defined on a probability space \((\Omega, \mathcal{F}, \mathbb{P})\) with state space \(B\), satisfying the following conditions:
\begin{enumerate}
    \item  \textit{Stationary Increments:} For any \(s,\,t \geq 0\), the increment \(L_{t,t+s} := L_{t+s} - L_t\) is independent of \(L_u\) for \(u \leq t\) and has the same distribution as \(L_s\).
    \item  \textit{Independent Increments:} The increments \(L_{t_1, t_2} = L_{t_2} - L_{t_1}\), \(L_{t_2, t_3} = L_{t_3} - L_{t_2}\), \ldots, \(L_{t_{n-1}, t_n} = L_{t_n} - L_{t_{n-1}}\) are independent for any choice of \(t_1 \leq t_2 \leq \cdots \leq t_n\).
    \item \textit{Stochastic Continuity:} For every \(\varepsilon > 0\) and \(s \geq 0\), we have
    \[
    \lim_{t \downarrow 0} \mathbb{P}(\|L_{s, s+t}\| > \varepsilon) = 0.
    \]
    \item \textit{C\`adl\`ag Paths:} The paths \(t \mapsto L_t(\omega)\) are almost surely c\`adl\`ag, i.e., right-continuous with left limits with respect to the norm.
    \item \textit{Initial Condition:} \(L_0 = 0\) almost surely.
\end{enumerate}

We assume without loss of generality that \(\mathcal{F} = \sigma\{L_t \mid t \geq 0\}\). The natural filtration \(\{\mathcal{F}_t^0\}_{t \geq 0}\) of \(\bm{L}\) is given by \(\mathcal{F}_t^0 = \sigma\{L_s \mid s \leq t\}\). Setting \(\mathcal{F}_t = \mathcal{F}_t^0 \vee \mathcal{N}\), where \(\mathcal{N}\) denotes the \(\mathbb{P}\)-null sets of \(\mathcal{F}\), the filtered probability space \((\Omega, \mathcal{F}, \{\mathcal{F}_t\}_{t \geq 0}, \mathbb{P})\) satisfies the usual hypotheses, and the process \(\bm{L}\) is adapted to the filtration \(\{\mathcal{F}_t\}_{t \geq 0}\).

In the context of L\'evy processes, it is standard to model the sample space \(\Omega\) as the space \(D([0, \infty), B)\) of c\`adl\`ag functions from \([0,\infty)\) to \(B\), and \(\mathcal{F}\) as the Borel \(\sigma\)-algebra on \(D([0, \infty), B)\) generated by the open sets in the Skorokhod topology. In this setting, \(\bm{L} = \{L_t\}_{t \geq 0}\) is realized as the coordinate process on \(\Omega\), \(L_t(\omega) = \omega(t)\), and \(\mathbb{P}\) is the unique probability measure on \(\Omega\) that extends the finite-dimensional distributions of \(\bm{L}\), obtained using Kolmogorov's extension theorem. While the standard model will not be used by default in this article, it will be applied implicitly on several occasions, as will be clear from the context.

The characteristic function of any \(B\)-valued L\'evy process \(\bm{L} = \{L_t\}_{t \geq 0}\) is given by:
\[
\mathbb{E}\left[e^{i\langle \phi, L_t \rangle}\right] = \exp(t \Psi_{\bm{L}}(\phi)),
\]
for all \(\phi\) in the dual space \(B^*\) of \(B\). The L\'evy exponent of \(\bm{L}\), \(\Psi_{\bm{L}}: B^* \to \mathbb{C}\), is given by the L\'evy-Khintchine formula:
\[
\Psi_{\bm{L}}(\phi) = i \langle \phi, b \rangle - \frac{1}{2} Q(\phi, \phi) + \int_{B \setminus \{0\}} \left(e^{i \langle \phi, x \rangle} - 1 - i \langle \phi, x \rangle \mathbf{1}_{\|x\| \leq 1}\right) \nu(dx),
\]
for all \(\phi \in B^*\), where 
\begin{enumerate}
    \item[(i)] \(b \in B\) is a drift vector;
    \item[(ii)] \(Q: B^* \times B^* \to \mathbb{R}\) is the covariance of \(\bm{L}\), representing the Gaussian component of the process; and
    \item[(iii)] \(\nu\) is a L\'evy measure on \(B\), which satisfies:
    \[
    \int_{B} \min(1, \|x\|^2) \, \nu(dx) < \infty.
    \]
    The L\'evy measure \(\nu\) describes the jump behavior of the process.
\end{enumerate}
The triplet \((b, Q, \nu)\), referred to as the L\'evy-Khintchine triplet of \(\bm{L}\), uniquely determines the law of \(\bm{L}\).

The L\'evy-It\^o decomposition states that any \(B\)-valued L\'evy process \(\bm{L} = \{L_t\}_{t \geq 0}\) can be split into three independent components, given by the expression
\[
L_t = bt + W_t + J_t, \quad t \geq 0,
\]
where \(bt\) is the deterministic drift, \(W_t\) represents the Wiener process (which has continuous paths and corresponds to the Gaussian part), and \(J_t\) is the jump process, determined by the L\'evy measure \(\nu\), capturing the discontinuous jumps.

\subsection{Product Systems of Hilbert Spaces}

A product system is a bundle 
\[
E = \{(t, x) \mid x \in E(t),\, t > 0\}
\]
of infinite-dimensional, separable Hilbert spaces \((E(t), \langle \cdot, \cdot \rangle_{E(t)})\), which also carries the structure of a standard Borel space. The Borel structure must be compatible with the bundle structure, the vector space operations, and the inner products. The bundle also includes a ``multiplication'' represented by a measurable family \(\{U_{s, t}\}_{s,\,t > 0}\) of unitary operators
\[
E(s) \otimes E(t) \ni x \otimes y \longmapsto U_{s,t}(x \otimes y) =: xy \in E(t+s), \quad s,\, t > 0,
\]
which satisfy the associativity condition:
\begin{equation}\label{intro2}
U_{t_1, t_2+t_3}(\mathbf{1}_{E(t_1)} \otimes U_{t_2,t_3}) = U_{t_1+t_2,t_3}(U_{t_1, t_2} \otimes \mathbf{1}_{E(t_3)}),
\end{equation}
for all \(t_1,\, t_2,\, t_3 > 0\). Additionally, the bundle \(E\) is required to be trivializable, i.e., isomorphic as a measurable family of Hilbert spaces to the trivial bundle \((0, \infty) \times \ell^2(\mathbb{N})\).

A unit \(u = \{u(t)\}_{t > 0}\) of a product system \(E\) is defined as a non-zero Borel section \((0, \infty) \ni t \mapsto u(t) \in E(t)\) that satisfies the condition 
\[
u(s + t) = u(s) u(t), \quad s,\, t > 0.
\]
If each \(u(t)\) has norm 1, \(u\) is called a normalized unit. The set of all units of \(E\) is denoted by \(\mathcal{U}(E)\).

If \(\mathcal{U}(E) \neq \emptyset\), the product system \(E\) is said to be spatial. Moreover, if every fiber \(E(t)\) is the closed linear span of finite products of sections \(u_1(t_1) u_2(t_2) \dots u_k(t_k)\), where \(u_i = \{u_i(t)\}_{t > 0}\) are units of \(E\) and \(t_1 + t_2 + \dots + t_k = t\), then \(E\) is said to be completely spatial, or a product system of type \(\mathrm{I}\). If \(E\) is spatial but not completely spatial, it is referred to as a type II product system. Finally, if \(\mathcal{U}(E) = \emptyset\), \(E\) is considered a type III product system.

The Arveson index \(\operatorname{ind}(E) \in \{0, 1,\, 2,\, \dots\} \cup \{\infty\}\) of \(E\) is a numerical invariant that counts the number of non-equivalent units of \(E\). Two units \(\{u(t)\}_{t > 0}\) and \(\{v(t)\}_{t > 0}\) are said to be equivalent if there exists \(\lambda \in \mathbb{C}\) such that 
\[
u(t) = e^{\lambda t} v(t), \quad \text{for all } t > 0.
\]
The Arveson index is defined in terms of the cardinality of the quotient set as
\[
\operatorname{ind}(E) + 1 = \#\left(\mathcal{U}(E)/\sim\right),
\]
if \(E\) is spatial, and \(\infty\) if \(E\) is of type III. The index is usually denoted as a subscript: for example, a \(\mathrm{II}_\infty\) product system refers to a product system of type \(\mathrm{II}\) with infinite Arveson index.

Type I product systems are associated with the CAR/CCR flows and are fully classified by their index: two product systems of type I are isomorphic if and only if they have the same index. However, the situation changes drastically for type II and type III product systems, as in both cases, a continuum of non-isomorphic product systems with the same index has been constructed \cite{Tsirelson2000, Tsirelson2002}.

\section{The Product System of a \(B\)-valued L\'evy Process}\label{sec:product_system}

Let \(B\) be a real, separable  Banach space, and let \(\bm{L}=\{L_t\}_{t\geq 0}\) be a \(B\)-valued L\'evy process on the filtered probability space \((\Omega, \m{F}, \{\m{F}_t\}_{t\geq 0}, \mathbb{P})\).  We consider the increments \(L_{s,t}=L_t-L_s\) of the process,  and the corresponding \(\sigma\)-algebras \[\m{F}_{s,t}^0=\sigma \{L_{u, v}\,|\,s\leq u\leq v\leq t\},\quad \m{F}_{s,t}=\m{F}_{s,t}^0\vee \m{N}.\] We denote by $\m{C}_{\bm{L}}(s,t)$ the space of all cylindrical functions of the form \[\tau(L_{t_0, t_1}, L_{t_1, t_2},\,\cdots L_{t_{n-1},t_n}),\] where \(s=t_0\leq t_1\leq t_2\leq \cdots t_n=t\) and \(\tau:B^n\to \mathbb{C}\) is a bounded measurable function. The space \(\m{C}_{\bm{L}}(0,t)\) will simply be denoted by  \(\m{C}_{\bm{L}}(t)\). 

The following standard density result will be applied in several instances throughout this article. For convenience, a brief proof is provided below.

\begin{lemma} $\m{C}_{\bm{L}}(s,t)$ is a dense subspace of $L^2(\Omega, \mathcal{F}_{s,t}, \mathbb{P})$. \end{lemma}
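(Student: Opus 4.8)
The plan is to treat $\m{C}_{\bm{L}}(s,t)$ as a multiplicative class of bounded functions and to invoke the functional monotone class theorem. First I would verify that $\m{C}_{\bm{L}}(s,t)$ is a $*$-subalgebra of the bounded functions that contains the constants. The only non-obvious point is closure under products: given two cylindrical functions written over partitions $P$ and $Q$ of $[s,t]$, I would pass to the common refinement $R = P \cup Q$, say $s = r_0 \leq r_1 \leq \cdots \leq r_m = t$. Because increments are additive, each original increment is a sum $L_{t_{i-1},t_i} = \sum_{j} L_{r_{j-1},r_j}$ over the refined points falling in $(t_{i-1}, t_i]$, hence a linear, and in particular bounded measurable, function of the refined increments $L_{r_{j-1},r_j}$. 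Thus both functions, and therefore their product, can be written as a bounded measurable function of the same finite family of increments, so the product again lies in $\m{C}_{\bm{L}}(s,t)$. Since $\m{C}_{\bm{L}}(s,t)$ consists of bounded functions on the probability space $(\Omega,\mathbb{P})$, it is automatically contained in $L^2(\Omega, \m{F}_{s,t}, \mathbb{P})$.

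Next I would identify the $\sigma$-algebra generated by $\m{C}_{\bm{L}}(s,t)$. On one hand, every cylindrical function is $\m{F}_{s,t}^0$-measurable. On the other hand, for each pair $s \leq u \leq v \leq t$ the increment $L_{u,v}$ is a coordinate of the cylindrical functions associated with the partition $s \leq u \leq v \leq t$; applying bounded measurable maps $\tau$ (for instance $\arctan$, or indicators of Borel sets) shows that $\sigma(L_{u,v}) \subseteq \sigma(\m{C}_{\bm{L}}(s,t))$. Since by definition the increments $\{L_{u,v} : s \leq u \leq v \leq t\}$ generate $\m{F}_{s,t}^0$, I obtain $\sigma(\m{C}_{\bm{L}}(s,t)) = \m{F}_{s,t}^0$.

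With these two facts the density follows from the functional monotone class theorem, which I would apply to real-valued functions and then extend to complex ones via real and imaginary parts. Let $\m{H}$ denote the set of bounded real $\m{F}_{s,t}^0$-measurable functions lying in the $L^2(\mathbb{P})$-closure of the real part of $\m{C}_{\bm{L}}(s,t)$. Then $\m{H}$ is a vector space containing the constants and the multiplicative class of real cylindrical functions, and it is closed under bounded monotone limits: if $0 \leq f_n \uparrow f$ with the $f_n$ uniformly bounded and each $f_n \in \m{H}$, then $f_n \to f$ in $L^2(\mathbb{P})$ by dominated convergence (here $\mathbb{P}$ is finite), so $f \in \m{H}$. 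The functional monotone class theorem then forces $\m{H}$ to contain every bounded $\m{F}_{s,t}^0$-measurable function. Finally, since bounded functions are dense in $L^2$ of a probability space, and since passing from $\m{F}_{s,t}^0$ to its completion $\m{F}_{s,t}$ does not enlarge $L^2$ (each $\m{F}_{s,t}$-measurable function agrees almost surely with an $\m{F}_{s,t}^0$-measurable one), I conclude that $\m{C}_{\bm{L}}(s,t)$ is dense in $L^2(\Omega, \m{F}_{s,t}, \mathbb{P})$.

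I expect the main obstacle to be the bookkeeping in the first step, namely verifying via the common refinement of partitions that $\m{C}_{\bm{L}}(s,t)$ is genuinely an algebra, together with the correct invocation of the functional monotone class theorem, ensuring that $\m{H}$ is closed under bounded monotone limits and that the generating class is multiplicative. The measure-theoretic reductions in the last step are routine.
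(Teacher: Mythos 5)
Your proof is correct, but it takes a genuinely different (and in fact more careful) route than the paper. The paper's own proof is a two-line sketch: approximate $f\in L^2(\Omega,\mathcal{F}_{s,t},\mathbb{P})$ by a simple function $g=\sum_k c_k\mathbf{1}_{A_k}$ and then assert that each $A_k\in\mathcal{F}_{s,t}$ ``can be represented'' as a set of the form $\{L_{s_{i-1},s_i}\in B_{k,i},\ i=1,\dots,n\}$, so that $g$ is itself cylindrical. Taken literally, that representation only holds for sets in the algebra generated by the increments, not for arbitrary elements of the $\sigma$-algebra they generate (nor for the null-completed $\mathcal{F}_{s,t}$); making the paper's sketch rigorous requires precisely an approximation-in-measure or monotone class step. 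Your argument supplies exactly that: you verify that $\mathcal{C}_{\bm{L}}(s,t)$ is a $*$-algebra via the common-refinement bookkeeping (which is sound --- each coarse increment is a sum of refined ones, so both factors become bounded measurable functions of one common finite family of increments), you identify $\sigma(\mathcal{C}_{\bm{L}}(s,t))=\mathcal{F}^0_{s,t}$, and you close the gap with the functional monotone class theorem, handling the passage from $\mathcal{F}^0_{s,t}$ to its completion explicitly --- a point the paper passes over silently. One small technical remark: the real parts of complex cylindrical functions do not form a multiplicative class as such; what saves you, as your own phrasing ``real cylindrical functions'' suggests, is that conjugation-closure of $\mathcal{C}_{\bm{L}}(s,t)$ puts real and imaginary parts back inside the class, and the real-valued cylindrical functions are themselves closed under products --- it is to this class that the theorem should be applied. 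In short, the paper buys brevity and intuition at the cost of a genuine (if standard) gap; your version is longer but airtight.
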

\begin{proof} Let \( f \in L^2(\Omega, \mathcal{F}_{s,t}, \mathbb{P}) \). For any \( \varepsilon > 0 \), let
\(
g = \sum_{k=1}^N c_k \mathbf{1}_{A_k},
\)
where \( c_k \in \mathbb{C} \) and \( A_k \in \mathcal{F}_{s,t} \), such that
\(
\| f - g \|_{L^2} < \varepsilon.
\) Each \( A_k \) can be represented as a finite union of sets defined by conditions on the increments \( L_{u,v} \). Specifically, for a partition \( s = s_0 < s_1 < \dots < s_n = t \), we can express \( A_k \) as
\(
A_k = \left\{ \omega \in \Omega\,|\, L_{s_{i-1}, s_i}(\omega) \in B_{k,i} \text{ for } i = 1, \dots, n \right\},
\)
where \( B_{k,i} \) are Borel subsets of \( B \). Therefore
\( g = \sum_{k=1}^N c_k \prod_{i=1}^n \mathbf{1}_{B_{k,i}}(L_{s_{i-1}, s_i})
\in \mathcal{C}_{\bm{L}}(s,t) \).
\end{proof}
Using this lemma result, one can readily see the operator \begin{align}\label{oper}
 L^2(B, \mu_t)\ni f \mapsto f\comp L_t\in L^2(\Omega, \mathcal{F}_t, \mathbb{P})\end{align} is unitary. With these notations and considerations at hand, the shift transformation of \(\m{C}_{\bm{L}}(t)\) into \(\m{C}_{\bm{L}}(s,t)\), acting as
\[
\tau(L_{t_0, t_1},\cdots,L_{t_{n-1},t_{n}} )\mapsto \tau(L_{t_0+s, t_1+s}, \cdots,L_{t_{n-1}+s, t_n+s} )
\] 
extends uniquely to a unitary operator \(S_{s,t}: L^2(\Omega, \mathcal{F}_t, \mathbb{P})\to L^2(\Omega, \mathcal{F}_{s, s+t}, \mathbb{P})\), for all \(s,\,t\geq 0\).  We notice that \(S_{s,t}\) is simply the Koopman operator 
\[
S_{s,t}f=f\comp \sigma_{s},\quad f\in  L^2(\Omega, \mathcal{F}_t, \mathbb{P}),
\]
 where \(\sigma_{s}\) is a measurable and measure-preserving bijective transformation such that, for all \(t\geq 0\), \[L_t(\sigma_s(\omega))=L_{s+t}(\omega)-L_s(\omega),\] i.e., \(\sigma_s\) shifts the ``timeline'' of the process \(L_t\) by \(s\) units and adjusts by subtracting \(L_s(\omega)\) to align the starting point. This, in turn, gives rise to an operator \(U_{s,t}:  L^2(\Omega, \mathcal{F}_s, \mathbb{P})\otimes  L^2(\Omega, \mathcal{F}_t, \mathbb{P})\to  L^2(\Omega, \mathcal{F}_{s+t}, \mathbb{P})\), \begin{eqnarray}\label{mult}
U_{s,t}(f\otimes g)(\omega)=f(\omega)S_{s,t}g(\omega),\end{eqnarray} for all \(f\in  L^2(\Omega, \mathcal{F}_t, \mathbb{P})\), $g\in  L^2(\Omega, \mathcal{F}_s, \mathbb{P})$, $\omega\in \Omega$, and $s,\,t>0$.
\begin{lemma}\label{unitary} The operators $U_{s,t}$ are unitary and satisfy the associativity condition (\ref{intro2}).
\end{lemma}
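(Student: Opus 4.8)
The plan is to realize $U_{s,t}$ as a composition of two manifestly unitary maps, and then to reduce the associativity identity \eqref{intro2} to an elementary semigroup relation for the shifts $\sigma_s$.

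For unitarity, the key observation is that the independence of increments produces a natural tensor factorization of $L^2$. Since $\mathcal{F}_{s+t} = \mathcal{F}_s \vee \mathcal{F}_{s,s+t}$ and the sub-$\sigma$-algebras $\mathcal{F}_s$ and $\mathcal{F}_{s,s+t}$ are independent, the pointwise multiplication map
\[
M_{s,t}\colon L^2(\Omega,\mathcal{F}_s,\mathbb{P}) \otimes L^2(\Omega,\mathcal{F}_{s,s+t},\mathbb{P}) \longrightarrow L^2(\Omega,\mathcal{F}_{s+t},\mathbb{P}), \qquad f \otimes h \longmapsto f\cdot h,
\]
is unitary. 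First I would check that $M_{s,t}$ preserves inner products on simple tensors: for $f_1,f_2 \in L^2(\mathcal{F}_s)$ and $h_1,h_2 \in L^2(\mathcal{F}_{s,s+t})$, independence factors the expectation as $\mathbb{E}[f_1\overline{f_2}\,h_1\overline{h_2}] = \mathbb{E}[f_1\overline{f_2}]\,\mathbb{E}[h_1\overline{h_2}]$, so that $\langle f_1 h_1, f_2 h_2\rangle = \langle f_1,f_2\rangle\,\langle h_1,h_2\rangle$; extending bilinearly shows $M_{s,t}$ is an isometry. Surjectivity then follows because the range contains all products of cylindrical functions on $[0,s]$ with cylindrical functions on $[s,s+t]$, which together span a dense subspace of $L^2(\mathcal{F}_{s+t})$ by the density lemma above. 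Since $S_{s,t}\colon L^2(\mathcal{F}_t,\mathbb{P}) \to L^2(\mathcal{F}_{s,s+t},\mathbb{P})$ is unitary, so is $\mathbf{1}\otimes S_{s,t}$, and comparison with \eqref{mult} gives $U_{s,t} = M_{s,t}\circ(\mathbf{1}\otimes S_{s,t})$ on simple tensors. Being a composition of unitaries, $U_{s,t}$ is unitary.

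For the associativity relation \eqref{intro2}, I would evaluate both sides on a simple tensor $f\otimes g\otimes h$ with $f\in L^2(\mathcal{F}_{t_1})$, $g\in L^2(\mathcal{F}_{t_2})$, $h\in L^2(\mathcal{F}_{t_3})$. Because each $S_{s,\cdot}$ is the Koopman operator of $\sigma_s$ and is therefore multiplicative, the left-hand side reduces to $f\cdot(g\circ\sigma_{t_1})\cdot(h\circ\sigma_{t_2}\circ\sigma_{t_1})$, while the right-hand side reduces to $f\cdot(g\circ\sigma_{t_1})\cdot(h\circ\sigma_{t_1+t_2})$. The two agree precisely when $\sigma_{t_2}\circ\sigma_{t_1} = \sigma_{t_1+t_2}$ almost everywhere. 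This last identity I would verify directly from the defining relation $L_t\circ\sigma_s = L_{s+t}-L_s$: a short computation gives $L_t(\sigma_{t_2}(\sigma_{t_1}(\omega))) = L_{t_1+t_2+t}(\omega)-L_{t_1+t_2}(\omega) = L_t(\sigma_{t_1+t_2}(\omega))$ for every $t$, and since the increments generate $\mathcal{F}$ this forces $\sigma_{t_2}\circ\sigma_{t_1}=\sigma_{t_1+t_2}$. Finally, as simple tensors span a dense subspace and every operator involved is bounded, the identity established on simple tensors extends to all of the triple tensor product.

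The main obstacle is the factorization step $M_{s,t}$: it is the only place where the probabilistic structure, namely the independence of increments, is genuinely used, and one must check that pointwise products of $L^2$ functions coming from independent $\sigma$-algebras again lie in $L^2$ with the expected norm. This is exactly what the independence computation $\|f h\|_2^2 = \mathbb{E}[|f|^2]\,\mathbb{E}[|h|^2] = \|f\|_2^2\,\|h\|_2^2$ guarantees. Once this factorization is in hand, both unitarity and associativity follow by formal manipulations, with the associativity hinging only on the semigroup property of the shifts $\sigma_s$.
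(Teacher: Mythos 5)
Your proof is correct and follows essentially the same route as the paper's: unitarity comes from the independence of $\mathcal{F}_s$ and $\mathcal{F}_{s,s+t}$ (your factorization $U_{s,t}=M_{s,t}\circ(\mathbf{1}\otimes S_{s,t})$ is just a cleaner packaging of the paper's direct isometry computation and its surjectivity argument via dense sums $\sum_k f_k\, S_{s,t}g_k$), and associativity reduces to the shift relation $S_{t_1,t_2+t_3}S_{t_2,t_3}=S_{t_1+t_2,t_3}$, equivalently $\sigma_{t_2}\circ\sigma_{t_1}=\sigma_{t_1+t_2}$. The only substantive difference is that you actually verify this semigroup identity from $L_t\circ\sigma_s=L_{s+t}-L_s$, a step the paper dismisses as immediate.
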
 
\begin{proof} First, we show that \( U_{s,t} \) is an isometry. For this, let \(f\in  L^2(\Omega, \mathcal{F}_t, \mathbb{P})\), \(g\in  L^2(\Omega, \mathcal{F}_s, \mathbb{P})\). Since  \( \mathcal{F}_s \) and \( \mathcal{F}_{s, s + t} \) are independent due to the independence of increments of \( \bm{L} \), we have:
\begin{eqnarray*}
\| U_{s,t}(f \otimes g) \|_{L^2}^2& =& \mathbb{E}\left[ |f|^2 |S_{s,t}g|^2 \right] 
= \mathbb{E}\left[ |f|^2 \right] \mathbb{E}\left[ |S_{s,t}g|^2 \right] = \| f \|_{L^2}^2 \| g \|_{L^2}^2,
\end{eqnarray*}
where we also used the facts that  \( f \) and \( S_{s,t}g \) are independent and that \( S_{s,t} \) is an isometry.

Next, we show that \( U_{s,t} \) is surjective or, equivalently, that its range is closed. For this, let \( h \in L^2(\Omega, \mathcal{F}_{s + t}, \mathbb{P}) \). Since \( \mathcal{F}_{s + t} = \mathcal{F}_s \vee \mathcal{F}_{s, s + t} \) and \( \bm{L} \) has independent increments, one can approximate \( h \) in the \( L^2 \)-norm by finite sums of the form \(h_\varepsilon=\Sigma_{k=1}^Nf_k\cdot h_k\), where \( f_k \in  L^2(\Omega, \mathcal{F}_t, \mathbb{P})\) \(h_k\in  L^2(\Omega, \mathcal{F}_{s, s+t}, \mathbb{P})\) and \(\|h-h_\varepsilon\|_{L^2}<\varepsilon\). If \(h_k=S_{s,t}g_{k}\) for some $g_k\in L^2(\Omega, \mathcal{F}_t, \mathbb{P})$, then $h_\varepsilon=U_{s,t}(\Sigma_{k=1}^Nf_k\otimes h_k)$, so the range of \( U_{s,t} \) is closed.   

The associativity property  \eqref{intro2} can be verified immediately with the help of property  \( S_{t_1, t_2 + t_3}S_{t_2, t_3} = S_{t_1 + t_2, t_3} \), for all \(t_1,\,t_2,\,t_2>0\).
\end{proof}
In the following proposition, we associate a product system with any \(B\)-valued L\'evy process. This is simply the natural extension, at the level of Banach spaces, of the construction by Meyer \cite{Mey} and Markiewicz \cite{Mar0, Mar1} for \(\mathbb{R}^n\)-valued L\'evy processes.
\begin{proposition}
For any \(B\)-valued L\'evy process \(\bm{L}=\{L_t\}_{t\geq 0}\), the bundle \[E^{\bm{L}}=\{(t, f)\,|\,f\in  L^2(\Omega, \mathcal{F}_t, \mathbb{P}),\,t>0\}\subset (0,\infty)\times L^2(\Omega, \m{F}, \mathbb{P})\]  is a spatial product system when endowed with the multiplication \(\{U_{s,t}\}_{s, t>0}\). 
\end{proposition}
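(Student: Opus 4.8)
The plan is to verify the defining axioms of a product system for $E^{\bm{L}}$ one at a time, since the unitarity and associativity of the multiplication have already been established in Lemma \ref{unitary}. What remains is to check that the fibres are separable infinite-dimensional Hilbert spaces, that $E^{\bm{L}}$ carries a compatible standard Borel structure and is trivializable, that $(s,t)\mapsto U_{s,t}$ is measurable, and that $\mathcal{U}(E^{\bm{L}})\neq\emptyset$. Each fibre $E^{\bm{L}}(t)=L^2(\Omega,\mathcal{F}_t,\mathbb{P})$ is a separable Hilbert space: separability is immediate from the unitary identification (\ref{oper}), $L^2(B,\mu_t)\cong L^2(\Omega,\mathcal{F}_t,\mathbb{P})$, together with the separability of $B$ and hence of $L^2(B,\mu_t)$; infinite-dimensionality holds as soon as $\bm{L}$ is nondegenerate.

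The main point is to realize $\{E^{\bm{L}}(t)\}_{t>0}$ as a trivializable measurable field of Hilbert spaces inside the constant bundle $(0,\infty)\times L^2(\Omega,\mathcal{F},\mathbb{P})$, where a section $t\mapsto f(t)$ is declared Borel precisely when it is Borel as a map into $L^2(\Omega,\mathcal{F},\mathbb{P})$. Here I would use the conditional-expectation projections $P_t=\mathbb{E}[\,\cdot\mid\mathcal{F}_t]$, which form a measurable (indeed increasing, right-continuous in the strong operator topology) family of projections with range $E^{\bm{L}}(t)$. Starting from a countable set $\{g_n\}$ dense in $L^2(\Omega,\mathcal{F},\mathbb{P})$ — whose existence is guaranteed by the density Lemma applied to the cylindrical functions — the sections $t\mapsto P_t g_n$ are Borel and total in each fibre, hence furnish a fundamental sequence exhibiting $\{E^{\bm{L}}(t)\}$ as a measurable field. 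Since every fibre has the same countably infinite dimension, the standard classification of measurable fields of separable Hilbert spaces of constant dimension yields an isomorphism onto the trivial bundle $(0,\infty)\times\ell^2(\mathbb{N})$, which is the required trivializability.

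Measurability of the multiplication is then routine: from the defining formula (\ref{mult}) and the explicit Koopman description $S_{s,t}f=f\comp\sigma_s$, the assignment $(s,t)\mapsto U_{s,t}$ is jointly Borel, and combined with the unitarity and associativity from Lemma \ref{unitary} this completes the bundle-with-multiplication structure. For spatiality I would exhibit an explicit unit. The constant section $u(t)\equiv\mathbf{1}_\Omega$ already works, since $S_{s,t}\mathbf{1}=\mathbf{1}$ forces $U_{s,t}(\mathbf{1}\otimes\mathbf{1})=\mathbf{1}$; more generally, for each $\phi\in B^*$ the normalized exponential $u_\phi(t)=e^{i\langle\phi,L_t\rangle-t\Psi_{\bm{L}}(\phi)}$ is a Borel section satisfying $u_\phi(s+t)=U_{s,t}(u_\phi(s)\otimes u_\phi(t))$, using the increment decomposition $L_{s+t}=L_s+L_{s,s+t}$ together with the shift identity $L_t\comp\sigma_s=L_{s,s+t}$. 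Thus $\mathcal{U}(E^{\bm{L}})\neq\emptyset$ and $E^{\bm{L}}$ is spatial.

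The genuine obstacle is the second step: producing a fundamental sequence of Borel sections and certifying the measurable-field structure and trivializability, since the fibres vary with $t$ as nested subspaces of a fixed $L^2$-space and one must control their dependence on $t$ through the projections $P_t$. By contrast, the unitarity, associativity, measurability of the multiplication, and the existence of a unit are comparatively direct once the measurable structure is in hand.
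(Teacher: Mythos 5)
Your proof is correct and follows essentially the same route as the paper: Lemma \ref{unitary} supplies unitarity and associativity, and spatiality is witnessed by the exponential sections \(e^{i\langle\phi,L_t\rangle}\) (your constant unit is the case \(\phi=0\), and your extra factor \(e^{-t\Psi_{\bm{L}}(\phi)}\) is harmless but not actually normalizing, since \(e^{i\langle\phi,L_t\rangle}\) already has unit \(L^2\)-norm pointwise). The only divergence is one of detail rather than method: where you construct the measurable-field structure explicitly via the projections \(P_t=\mathbb{E}[\,\cdot\mid\mathcal{F}_t]\) and a fundamental sequence of Borel sections, the paper disposes of the Borel requirements in one line by observing that \(E^{\bm{L}}\) is a Borel subset of the Polish space \((0,\infty)\times L^2(\Omega,\mathcal{F},\mathbb{P})\), hence standard Borel, leaving trivializability implicit --- your fleshing-out of that step is sound and arguably more careful, as is your caveat that infinite-dimensionality of the fibres requires nondegeneracy of \(\bm{L}\).
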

\begin{proof}
Since  \(E^{\bm{L}}\) is a Borel subset of the Polish space \( (0,\infty)\times L^2(\Omega, \m{F}, \mathbb{P})\), it is a standard Borel space.
Lemma \ref{unitary} confirms that \( U_{s,t} \) is unitary and associative. Therefore, \( \{ L^2(\Omega, \mathcal{F}_t, \mathbb{P}) \}_{t > 0} \) with multiplication \( \{ U_{s,t} \} \) satisfies the conditions of a product system.

It remains to show the existence of units. For this define, for each \( \phi \in B^* \), the function
\[
u^\phi(t) = \exp(i \langle \phi, L_t \rangle), \quad t > 0,
\] where \( \langle \phi, L_t \rangle \) denotes the dual pairing between \( \phi \) and \( L_t \).
Each \( u^\phi (t)\) is a measurable section from \( (0, \infty) \) into \( L^2(\Omega, \mathcal{F}_t, \mathbb{P}) \). We claim that \( u^\phi \) is a unit of the product system. Indeed, for all \( s, t > 0 \),
\begin{eqnarray*}
u^\phi(s + t) &=& \exp(i \langle \phi, L_{s + t} \rangle) = \exp(i \langle \phi, L_s + (L_{s + t} - L_s) \rangle) \\&=& \exp(i \langle \phi, L_s \rangle) \exp(i \langle \phi, L_{s, s + t} \rangle).
\end{eqnarray*}
Since \( L_{s, s + t} \) is independent of \( \mathcal{F}_s \) and has the same distribution as \( L_t \), and because \( u^\phi(s) \) depends only on \( L_s \), we have:
\[
U_{s,t}(u^\phi(s) \otimes u^\phi(t)) = u^\phi(s) S_{s,t} u^\phi(t) = u^\phi(s) \exp(i \langle \phi, L_{s, s + t} \rangle) = u^\phi(s + t).
\]
Thus, \( u^\phi \) satisfies the unit property \( u^\phi(s + t) = U_{s,t}(u^\phi(s) \otimes u^\phi(t)) \).\end{proof}

Markiewicz showed in \cite{Mar0, Mar1} that in the finite-dimensional case \(B=\mathbb{R}^n\), any product system \(E^{\bm{L}}\) is of type I. Below, we find a set of independent conditions that guarantee that  \(E^{\bm{L}}\)
  is of type I in the infinite-dimensional case.
\begin{theorem}\label{Th1}Let  \( \bm{L} = \{L_t\}_{t > 0} \) be a  \(B\)-valued  L\'evy process with L\'evy-It\^o triplet \((b, Q, \nu)\). Suppose that the following conditions are satisfied: 
\begin{enumerate}
    \item The L\'evy measure \( \nu \) has support that generates \( B \).
    \item The Gaussian covariance \( Q \) is non-degenerate.
    
    \item The law \( \mu_t \) of \( L_t \) has full support.
\end{enumerate}

Under the given conditions, the linear span of the set of all exponential units
\begin{eqnarray}\label{expunit}
\mathcal{E}_t = \left\{ \exp\left( i \langle \omega, L_t \rangle \right)\,|\, \omega \in B^* \right\}
\end{eqnarray}
is dense in \( L^2(\Omega, \mathcal{F}_t, \mathbb{P}) \), for every \(t>0\).

\end{theorem}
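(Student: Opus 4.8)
The plan is to transport the entire problem from $L^2(\Omega,\mathcal F_t,\mathbb P)$ to $L^2(B,\mu_t)$, where $\mu_t$ is the law of $L_t$, by means of the unitary operator \eqref{oper}. Under $f\mapsto f\comp L_t$, each exponential unit $\exp(i\langle\omega,L_t\rangle)$ is the image of the bounded Borel function $e_\omega\colon B\to\mathbb C$, $e_\omega(x)=e^{i\langle\omega,x\rangle}$, which indeed lies in $L^2(B,\mu_t)$ since $\mu_t$ is a probability measure. Because \eqref{oper} is unitary and $\mathcal E_t$ is exactly the image of $\{e_\omega:\omega\in B^*\}$, it suffices to prove that the linear span of $\{e_\omega:\omega\in B^*\}$ is dense in $L^2(B,\mu_t)$.

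I would argue this by duality. Suppose $g\in L^2(B,\mu_t)$ is orthogonal to every $e_\omega$. Since $\mu_t$ is finite, Cauchy--Schwarz gives $g\in L^1(B,\mu_t)$, so $\rho:=\overline{g}\,\mu_t$ is a finite complex Borel measure on $B$. The orthogonality relations say precisely that the characteristic functional of $\rho$ vanishes identically:
\[
\widehat\rho(\omega)=\int_B e^{i\langle\omega,x\rangle}\,d\rho(x)=\ip{e_\omega}{g}_{L^2(\mu_t)}=0,\qquad \omega\in B^*.
\]

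The crux is then the injectivity of the Fourier transform on the separable Banach space $B$: a finite complex Borel measure whose characteristic functional vanishes must be the zero measure. I would derive this from the classical finite-dimensional uniqueness theorem together with separability. Since $B^*$ separates points of $B$ (Hahn--Banach) and $B$ is separable, the cylinder $\sigma$-algebra generated by $B^*$ equals the Borel $\sigma$-algebra. For any finite family $\omega_1,\dots,\omega_n\in B^*$, the vanishing of $\widehat\rho$ on $\operatorname{span}\{\omega_1,\dots,\omega_n\}$ asserts that the characteristic function of the pushforward of $\rho$ under $x\mapsto(\langle\omega_1,x\rangle,\dots,\langle\omega_n,x\rangle)$ is identically zero, whence that pushforward is the null measure on $\mathbb R^n$. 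Thus $\rho$ annihilates every cylinder set, and a standard monotone-class (or $\pi$--$\lambda$) argument propagates this to all Borel sets, giving $\rho=0$. Consequently $\overline g=0$ $\mu_t$-a.e., so $g=0$ in $L^2(B,\mu_t)$, and density follows.

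I expect the genuine content to sit entirely in this last step, the passage from ``characteristic functional vanishes'' to ``measure vanishes'' in infinite dimensions, rather than in the hypotheses (1)--(3). Those conditions serve mainly to ensure that $\mu_t$ has full topological support, so that no direction of $B$ is invisible to $L^2(B,\mu_t)$; the duality argument above runs verbatim on $\operatorname{supp}(\mu_t)$ in any case. If one prefers to avoid invoking the abstract uniqueness theorem, the same conclusion can be reached by a Stone--Weierstrass argument applied to the finite-dimensional projections of $\mu_t$ and then lifted through \eqref{oper}, but the measure-theoretic injectivity route is the cleanest.
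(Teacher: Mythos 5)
Your proof is correct, and after the shared first step it takes a genuinely different route from the paper's. Both arguments transport the problem to \(L^2(B,\mu_t)\) via the map \eqref{oper} (note that, like the paper, you lean on the paper's assertion that this map is unitary, i.e.\ that \(\mathcal{F}_t\) may be replaced by \(\sigma(L_t)\vee\mathcal{N}\); what both proofs actually establish is density in \(L^2(B,\mu_t)\)), and both exploit that separability of \(B\) plus Hahn--Banach make the cylinder \(\sigma\)-algebra generated by \(B^*\) coincide with the Borel \(\sigma\)-algebra. From there the paper argues by direct approximation: it fixes a countable weak*-dense family \(\{\omega_n\}\subset B^*\), approximates \(f\) by a cylindrical function \(F_n(\langle\omega_1,\cdot\rangle,\dots,\langle\omega_n,\cdot\rangle)\), pushes \(\mu_t\) forward to \(\nu_n=(\Phi_n)_{\#}\mu_t\) on \(\mathbb{R}^n\), and invokes the classical density of trigonometric exponentials in \(L^2(\mathbb{R}^n,\nu_n)\), citing hypothesis (3) to give \(\nu_n\) full support. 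You argue instead by annihilation: an orthogonal \(g\) yields a finite complex measure \(\rho=\overline{g}\,\mu_t\) with vanishing characteristic functional, finite-dimensional Fourier uniqueness kills every pushforward \((\Phi_n)_{\#}\rho\), and the \(\pi\)--\(\lambda\) theorem finishes (this step is even slightly easier than you suggest: cylinder sets form a \(\pi\)-system, and \(\{A : \rho(A)=0\}\) is itself a \(\lambda\)-system, since complex measures are countably additive and \(\rho(B)=\widehat{\rho}(0)=0\), so no decomposition of \(\rho\) is required). Your route is shorter and, as you correctly observe, it shows that hypotheses (1)--(3) are not needed for the stated conclusion: Fourier uniqueness for finite measures on \(\mathbb{R}^n\) requires no support assumption, and the same observation exposes that the paper's appeal to full support of \(\nu_n\) is superfluous --- the ``classical result'' it cites holds for every finite Borel measure on \(\mathbb{R}^n\) by exactly your uniqueness argument. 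What the paper's constructive scheme buys in exchange is an explicit trigonometric approximant of a given \(f\), a template it essentially reuses in the Hermite-expansion argument of Theorem \ref{theorem:completely_spatial}.
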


\begin{proof}

Using the identification in (\ref{oper}), it suffices to show that the linear span of \[
\mathcal{E}_t^\sharp = \left\{ \exp\left( i \langle \omega, \cdot \rangle \right)\,|\, \omega \in B^* \right\}
\] is dense in \( L^2(B, \mu_t) \).

Since \( B^* \) separates points in \( B \), the mapping \( x \mapsto \langle \omega, x \rangle \) distinguishes points in \( B \). Moreover, the Borel \(\sigma\)-algebra on \( B \) coincides with the $\sigma$-algebra generated by the evaluation maps \(x\mapsto \langle \omega, x \rangle \).

Let \( \{ \omega_n \}_{n=1}^\infty \) be a countable dense subset of \( B^* \) under the weak*-topology. For any \( f \in L^2(B, \mu_t) \) and \( \varepsilon > 0 \), there exists \( n \in \mathbb{N} \) and a measurable function \( F_n : \mathbb{R}^n \to \mathbb{C} \) such that
\[
\| f - F_n(\langle \omega_1, \cdot \rangle, \ldots, \langle \omega_n, \cdot \rangle) \|_{L^2(\mu_t)} < \frac{\varepsilon}{2}.
\]
This approximation is possible because of the coincidence of the cylindrical and Borel \(\sigma\)-algebras, which allows us to approximate \( f \) using functions depending on finitely many linear functionals.

Define the mapping
\[
\Phi_n : B \to \mathbb{R}^n, \quad \Phi_n(x) = \left( \langle \omega_1, x \rangle, \ldots, \langle \omega_n, x \rangle \right).
\]
Let \( \nu_n = (\Phi_n)_\# \mu_t \) be the pushforward measure of \( \mu_t \) under \( \Phi_n \). Since \( \mu_t \) has full support on \( B \), and \( \Phi_n \) is surjective onto its image, \( \nu_n \) is a Radon measure on \( \mathbb{R}^n \) with full support.

We recall that the set of exponential functions
\[
\mathcal{E}_n = \left\{ \exp\left( i \xi^\top y \right)\,|\, \xi \in \mathbb{R}^n \right\}
\]
is dense in \( L^2(\mathbb{R}^n, \nu_n) \). This is a classical result, relying on the fact that the Fourier exponentials span \( L^2 \) spaces with respect to measures that have full support and satisfy appropriate regularity conditions. Consequently, given \( F_n \in L^2(\mathbb{R}^n, \nu_n) \) and \( \varepsilon > 0 \), there exists a finite linear combination of exponential functions \( \sum_{k=1}^K c_k \exp(i \xi_k^\top y) \) such that
\[
\left\| F_n(y) - \sum_{k=1}^K c_k \exp(i \xi_k^\top y) \right\|_{L^2(\nu_n)} < \frac{\varepsilon}{2}.
\]
Define
\[
g(x) = \sum_{k=1}^K c_k \exp\left( i \xi_k^\top \Phi_n(x) \right) = \sum_{k=1}^K c_k \exp\left( i \left\langle \sum_{j=1}^n \xi_{k,j} \omega_j, \ x \right\rangle \right),
\]
where \( \xi_k = (\xi_{k,1}, \ldots, \xi_{k,n}) \in \mathbb{R}^n \).

Then \( g \in \operatorname{span}[\mathcal{E}_t^\sharp] \), and
\[
\| f - g \|_{L^2(\mu_t)} \leq \| f - F_n \|_{L^2(\mu_t)} + \| F_n - g \|_{L^2(\mu_t)} <  \varepsilon.
\]
Here, the second term satisfies
\[
\| F_n - g \|_{L^2(\mu_t)} = \| F_n - g \|_{L^2(\nu_n)} < \frac{\varepsilon}{2},
\]
since \( \Phi_n \) is a measurable mapping and \( \nu_n \) is the pushforward measure. Therefore, the linear span of \( \mathcal{E}_t^\sharp \) is dense in \( L^2(B, \mu_t) \), as claimed. \end{proof}

In the following example, we construct a L\'evy process in an infinite-dimensional Banach space \( B \) that satisfies all the conditions of Theorem \ref{Th1}.
\begin{example}\label{exm_corrected}
    Let \( B = \ell^p \) for some \( 1 \leq p < \infty \), and let \( \{ e_n \}_{n=1}^\infty \) be the standard basis of \( \ell^p \). We construct an  \( \ell^p \)-L\'evy process \( \bm{L} = \{ L_t \}_{t \geq 0} \), 
    \(
    L_t = b t + W_t + J_t,
    \)  as follows:
    
\indent (i) Drift: the drift \(b = (b_n)_{n=1}^\infty \) is an arbitrary element in \(\ell^p\).

(ii) Gaussian component: \( W_t = (W_t^{(n)})_{n=1}^\infty \), where each \( W_t^{(n)} \) is an independent real-valued Brownian motion with \( \mathbb{E}[ W_t^{(n)} ] = 0\) and \(\mathbb{E}[ (W_t^{(n)})^2 ] = q_n t\), where  \( q_n = n^{-r} \) with \( r > \frac{2}{p} \). The Gaussian process \( W_t \) thus satisfies:
            \[
            \mathbb{E}[ \| W_t \|_p^p ] = \sum_{n=1}^\infty \mathbb{E}[ |W_t^{(n)}|^p ] = C_p t^{p/2} \sum_{n=1}^\infty q_n^{p/2} < \infty,
            \]
where \( C_p = \mathbb{E}[ |Z|^p ] \) for \( Z \sim N(0,1) \), ensuring \( W_t \in \ell^p \) almost surely. The Gaussian covariance \(Q\) is given by \(Q(\phi, \psi) = \sum_{n=1}^\infty q_n \phi_n \psi_n\), for all \(\phi, \psi \in \ell^{p'},\) where \( 1/p + 1/p' = 1 \).
 
 (iii) Jump component: \( J_t = \sum_{k=1}^{N_t} X_k \), where \( N_t \) is a Poisson process with rate \( \lambda = \sum_{n=1}^\infty \lambda_n \), where \( \lambda_n = n^{-s} \) with \( s > \frac{1}{p} \), and  \( \{ X_k \}_{k=1}^\infty \) are i.i.d. random vectors in \( \ell^p \), each taking the value \( \lambda_n e_n \) with probability \( \frac{\lambda_n}{\lambda} \).  The jump component satisfies:
\[
            \mathbb{E}[ \| J_t \|_p^p ] = t \sum_{n=1}^\infty \lambda_n \| \lambda_n e_n \|_p^p = t \sum_{n=1}^\infty \lambda_n^{p+1} < \infty,
\]
ensuring \( J_t \in \ell^p \) almost surely. The L\'evy measure is \( \nu = \sum_{n=1}^\infty \lambda_n \delta_{\lambda_n e_n}.\)

All conditions of Theorem \ref{Th1} are satisfied by the constructed L\'evy process \( \bm{L} \) in \( \ell^p \), as one can easily see. Therefore, the linear span of exponential units
    \(
    \mathcal{E}_t 
    \)
    is dense in \( L^2(\Omega, \mathcal{F}_t, \mathbb{P}) \) for every \( t > 0 \). Consequently, the product system \(E^{\bm{L}}\) is type I.

\end{example}

Although exponential functions are the most natural units in the product system associated with a L\'evy process, it is possible, under certain conditions and with specific types of processes, to construct non-exponential units. Below, we provide a concrete example.

\begin{example}\label{exm1}
    Let \(H\) be an infinite-dimensional separable Hilbert space with an orthonormal basis \(\{ e_n \}_{n=1}^\infty\). Consider a compound Poisson process \(\bm{L} = \{ L_t \}_{t \geq 0}\) in \(H\) defined by
    \(
    L_t = \sum_{k=1}^{N_t} X_k,
    \)
    where \(N_t\) is a Poisson process with rate \(\lambda > 0\), and \(\{ X_k \}_{k=1}^\infty\) is a sequence of i.i.d. random vectors in \(H\), independent of \(N_t\). Each jump vector \(X_k\) is defined as 
    \(
    X_k = \sum_{n=1}^\infty \delta_{k,n} e_n,
    \)
where \(\delta_{k,n}\) are independent Bernoulli random variables taking values in \(\{0,1\}\) with 
    \[
    \mathbb{P}(\delta_{k,n} = 1) = p_n,\quad \mathbb{P}(\delta_{k,n} = 0) = 1 - p_n,
    \] for some \(p_n\in (0,1)\) satisfying \(\sum_{n=1}^\infty p_n < \infty\), to ensure that \(X_k\) is almost surely in \(H\) by the Kolmogorov Three-Series Theorem. For each \(t>0\), define \(u(t)\) based on the number of jumps \(N_t\):
    \[
    u(t) = \exp(i\pi N_t) = (-1)^{N_t}.
    \]
    We notice that \(u(t)\) depends on \(N_t\) rather than directly on \(L_t\). Since \(N_t\) is integer-valued and increases by 1 at each jump time, \(u(t)\) toggles between 1 and -1 at each jump. 
    
It is readily seen that \(u=\{u(t)\}_{t>0}\) is a unit. Indeed, for any \(s,\,t>0\), one has \begin{align*}
u(s + t) &= (-1)^{N_{s + t}} = (-1)^{N_s} \cdot (-1)^{N_{s, s + t}}=(-1)^{N_s} \cdot (-1)^{N_{t}\comp\sigma_s}\\&=U_{s, t}(u(s) \otimes u(t)),\end{align*}
 where \(N_{s, s + t} = N_{s + t} - N_s\) is the increment of the Poisson process between times \(s\) and \(s + t\), independent of \(N_s\), and \(\sigma_s\) is the shift transformation that maps the probability space \(\Omega\) to itself by shifting the timeline by \(s\) units.
However, we observe it's not possible to find a fixed \(\phi \in H^*=H\) such that:
    \[
    \sum_{k=1}^{N_t} \langle \phi, X_k \rangle \equiv \pi N_t \quad \text{(mod } 2\pi\text{)} 
    \]
    almost surely. Indeed, if that were possible, it would imply that \(
    \langle \phi, X_k \rangle =\sum_{n=1}^\infty\delta_{k,n} \langle \phi, e_n \rangle \equiv \pi \; \text{(mod } 2\pi\text{)}\)
    almost surely, so 
    \[
    \mathbb{P}\left( \langle \phi, X_k \rangle \equiv \pi \; \text{(mod } 2\pi\text{)} \right) = 1.
    \]
    However, since \(X_k\) are random vectors with each component \(\delta_{k,n}\) being Bernoulli, unless \(\phi\) is chosen such that \( \langle \phi, X_k \rangle \) is deterministic (which it isn't, due to the randomness in \(X_k\)), this probability is strictly less than 1. Therefore, the unit \(u\) cannot be expressed as \(u^\phi\) for any \(\phi \in H^*\).
\end{example}

\section{Purely Gaussian L\'evy processes}\label{sec:pureGauss}

We begin by detailing the properties of purely Gaussian L\'evy processes (Wiener processes) in separable Banach spaces, including the characterization of their laws, covariance structures, and associated Hilbert spaces. Our standard reference for these facts is \cite{Bog1}.

Let \( B \) be a real, separable Banach space, and let \( \bm{L} = \{L_t\}_{t > 0} \) be a purely Gaussian L\'evy process in \( B \) on the filtered probability space \((\Omega, \m{F}, \{\m{F}_t\}_{t\geq 0}, \mathbb{P})\), i.e., a \(B\)-valued L\'evy process with L\'evy-It\^o triplet \( (0, Q, 0) \), where \( Q \) is non-degenerate. For each \( t > 0 \), let \( \mu_t = \mathcal{L}(L_t) \) denote the law of \( L_t \). Throughout, we assume that \(\mu_t\) has full support on \(B\). Note that this requirement is automatically satisfied when 
\(B\) is a Hilbert space, but may fail in general. Hence, \( \mu_t \) is a centered Gaussian measure, fully supported on \( B \), with mean zero and covariance operator \( tQ \), and \(\{\mu_t\}_{t\geq 0}\) is a convolution semigroup. Moreover \( B^* \subset L^2(B, \mu_t) \), and for every \( f \in B^* \), \( tQ(f, f) = \|f\|^2_{L^2(B, \mu_t)}.\)
    
 Let \( B^*_{\mu_t} \) be the reproducing kernel Hilbert space (RKHS) of \( \mu_t \), i.e., the closure of \( B^* \) in \( L^2(B, \mu_t) \). Then \( Q \) admits a unique continuous extension to \( B^*_{\mu_t} \times B^*_{\mu_t} \), and for all \( f, g \in B^*_{\mu_t} \), \(Q(f, g) = \frac{1}{t} \langle f, g \rangle_{L^2(B, \mu_t)}.\)
    
Moreover, let \( R_{\mu_t} : B^*_{\mu_t} \to B \) be the covariance operator, uniquely determined by the property \(g(R_{\mu_t} f) = tQ(f, g),\) for all \(f \in B^*_{\mu_t}\) and  \(g \in B^*.\) Finally, let \( H_t \) be the Cameron-Martin space of \( \mu_t \), defined as
    \[
    H_t = \left\{ b \in B \,\Big|\, \|b\|_{H_t} < \infty \right\},
    \]
    where
    \[
    \|b\|_{H_t} = \sup \left\{ f(b) \,\Big|\, f \in B^*, \, \|f\|_{L^2(B, \mu_t)} \leq 1 \right\}.
    \]
    Then \( R_{\mu_t} : B^*_{\mu_t} \to H_t \) is a unitary operator, and \( H_t \) is a Hilbert space with the inner product \(\langle h, k \rangle_{H_t} = tQ(\hat{h}, \hat{k})=\langle \hat{h}, \hat{k} \rangle_{L^2(B, \mu_t)},\) whenever \( h = R_{\mu_t} \hat{h} \) and \( k = R_{\mu_t} \hat{k} \). In particular, the Hilbert space \( H_t \) scales with time as \(H_t = \sqrt{t} \, H,\) where \( H = H_1 \), and the inner product satisfies 
\[
    \langle \cdot, \cdot \rangle_{H_t} = \frac{1}{t} \langle \cdot, \cdot \rangle_{H},
 \]
    for every \( t > 0 \).

For every \( h \in H \), we define the process
\begin{eqnarray}\label{ess1}
u^{h}(t) = \exp\left( \langle \hat{h}, L_t \rangle - \frac{t}{2} \| h \|_H^2 \right),
\end{eqnarray} where  \( h = R_{\mu_t} \hat{h} \) and \( \langle \hat{h}, L_t \rangle \) denotes the dual pairing between \( \hat{h} \) and \( L_t \).
These processes \( \{u^{h}(t)\}_{t > 0} \) will be shown to constitute units in the product system associated with  \( \bm{L}\).

\begin{proposition}\label{Lem1}
Let \( \bm{L} = \{L_t\}_{t > 0} \) be a purely Gaussian L\'evy process. Then, for every \( h \in H \), the family \( u^h = \{u^{h}(t)\}_{t > 0} \) is a unit of the product system \( E^{\bm{L}} \). Moreover, the process \( \{u^h(t)\}_{t > 0} \) is a martingale with respect to the filtration \( \{ \mathcal{F}_t \}_{t > 0} \).
\end{proposition}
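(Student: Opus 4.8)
The plan is to reduce both assertions to the classical computation for exponential martingales, once the linear exponent $\langle\hat h,\cdot\rangle$ has been set up correctly. The one genuinely delicate point is that $\hat h$ need not lie in $B^*$: it is the measurable linear functional associated with $h\in H$, obtained as an $L^2$-limit of elements of $B^*$, so the family $\{\langle\hat h, L_t\rangle\}_{t>0}$ must first be constructed and its basic properties recorded. Concretely, I would fix a sequence $f_n\in B^*$ with $f_n\to\hat h$ in $L^2(B,\mu_1)$. Since $\mu_t$ is centered Gaussian with covariance $tQ$, for $f\in B^*$ one has $\|f\|_{L^2(B,\mu_t)}^2 = tQ(f,f) = t\,\|f\|_{L^2(B,\mu_1)}^2$, so a sequence Cauchy in $L^2(B,\mu_1)$ is simultaneously Cauchy in every $L^2(B,\mu_t)$. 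Transporting through the unitary (\ref{oper}), the variables $\langle f_n, L_t\rangle = f_n\comp L_t$ converge in $L^2(\Omega,\m{F}_t,\mathbb{P})$ to a limit I denote $\langle\hat h, L_t\rangle$; it is a centered real Gaussian with variance $\|\hat h\|_{L^2(B,\mu_t)}^2 = t\,\|h\|_H^2$ (using that $R_{\mu_1}$ is unitary, so $\|\hat h\|_{L^2(B,\mu_1)}=\|h\|_H$).

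Passing to the limit in the identities that hold trivially for $f_n\in B^*$ then yields the three facts used below: (i) additivity over increments, $\langle\hat h, L_{s+t}\rangle = \langle\hat h, L_s\rangle + \langle\hat h, L_{s,s+t}\rangle$; (ii) compatibility with the shift, $S_{s,t}\langle\hat h, L_t\rangle = \langle\hat h, L_t\rangle\comp\sigma_s = \langle\hat h, L_{s,s+t}\rangle$, which follows from $L_t\comp\sigma_s = L_{s,s+t}$ and continuity of $S_{s,t}$; and (iii) that $\langle\hat h, L_{s,s+t}\rangle$ is independent of $\m{F}_s$ and has the same law as $\langle\hat h, L_t\rangle$, by the stationary and independent increments of $\bm{L}$. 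With these in place, membership $u^h(t)\in L^2(\Omega,\m{F}_t,\mathbb{P})$ is immediate: the Gaussian moment formula gives $\mathbb{E}[\,|u^h(t)|^2\,] = e^{-t\|h\|_H^2}\,\mathbb{E}[\exp(2\langle\hat h, L_t\rangle)] = e^{\,t\|h\|_H^2}<\infty$, and $u^h(t)$ is $\m{F}_t$-measurable with $t\mapsto u^h(t)$ Borel (from joint measurability of $(t,\omega)\mapsto\langle\hat h, L_t\rangle(\omega)$).

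For the unit property I would repeat, nearly verbatim, the computation used for the exponential units $u^\phi$ in the preceding proposition, now invoking (i) and (ii) in place of the linearity of $\phi$. Using (\ref{mult}),
\begin{align*}
U_{s,t}\bigl(u^h(s)\otimes u^h(t)\bigr) &= u^h(s)\,S_{s,t}u^h(t) \\
&= \exp\!\Big(\langle\hat h, L_s\rangle - \tfrac{s}{2}\|h\|_H^2\Big)\,\exp\!\Big(\langle\hat h, L_{s,s+t}\rangle - \tfrac{t}{2}\|h\|_H^2\Big) \\
&= \exp\!\Big(\langle\hat h, L_{s+t}\rangle - \tfrac{s+t}{2}\|h\|_H^2\Big) = u^h(s+t),
\end{align*}
so $u^h$ satisfies the unit relation and, being a nonzero Borel section, is a unit of $E^{\bm{L}}$.

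Finally, for the martingale property I would again factor $u^h(s+t) = u^h(s)\,\exp\!\bigl(\langle\hat h, L_{s,s+t}\rangle - \tfrac{t}{2}\|h\|_H^2\bigr)$ using (i). The first factor is $\m{F}_s$-measurable, while by (iii) the second is independent of $\m{F}_s$ and, being the exponential of a centered Gaussian of variance $t\|h\|_H^2$ translated by $-\tfrac{t}{2}\|h\|_H^2$, has expectation $1$. Hence $\mathbb{E}[u^h(s+t)\mid\m{F}_s] = u^h(s)$; together with $u^h(t)\in L^2\subset L^1$ and adaptedness, this shows $\{u^h(t)\}_{t>0}$ is a martingale. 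I expect the first step — the $L^2$-limit construction of $\langle\hat h,\cdot\rangle$ and the passage of additivity, independence and shift-compatibility through that limit — to be the only real obstacle; everything downstream is the standard exponential-martingale calculation.
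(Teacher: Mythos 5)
Your proposal is correct and takes essentially the same route as the paper's proof: the same Gaussian second-moment bound for \(L^2\)-membership, the same factorization \(u^h(s+t)=u^h(s)\,S_{s,t}u^h(t)\) for the unit property, and the same conditioning on \(\mathcal{F}_s\) with the mean-one exponential factor for the martingale property. Your preliminary step — constructing \(\langle \hat{h}, L_t\rangle\) as an \(L^2\)-limit of \(\langle f_n, L_t\rangle\), \(f_n\in B^*\), and passing additivity, shift-compatibility and independence through the limit — is extra rigor on a point the paper leaves implicit (it is standard Gaussian measure theory, cf.\ Bogachev), not a different argument.
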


\begin{proof}
To ensure that \( u^h(t) \) belongs to \( L^2(\Omega, \mathcal{F}_t, \mathbb{P}) \), we compute its second moment:
\[
\mathbb{E}\left[ \left( u^h(t) \right)^2 \right] = \mathbb{E}\left[ \exp\left( 2 \langle \hat{h}, L_t \rangle - t \| h \|_H^2 \right) \right].
\]
Since \( \langle \hat{h}, L_t \rangle \) is Gaussian with mean \( 0 \) and variance \( t Q(\hat{h}, \hat{h}) \), the expectation simplifies to
\[
\mathbb{E}\left[ e^{2X - t \sigma^2} \right] = e^{2 \cdot 0 + 2^2 \cdot \frac{t \sigma^2}{2} - t \sigma^2} = e^{t \sigma^2} < \infty,
\]
where \( X = \langle \hat{h}, L_t \rangle \) and \( \sigma^2 = Q(\hat{h}, \hat{h}) \). Therefore, \( u^h(t) \in L^2(\Omega, \mathcal{F}_t, \mathbb{P}) \) and
\(
\| u^h(t) \|_{L^2} = e^{\frac{t\|h\|_H^2}{2}}.
\)

Next, we prove that \( u^h = \{ u^h(t) \}_{t > 0} \) is a unit of the product system \( E^{\bm{L}} \). It is clear that that \( u^h (t)\) is a measurable section from \( (0, \infty) \) into \( L^2(\Omega, \mathcal{F}_t, \mathbb{P}) \). Moreover, since \( L_{s+t} = L_s + (L_{s+t} - L_s) \) and \( L_{s+t} - L_s \) is independent of \( \mathcal{F}_s \), we can rewrite \( u^h(s+t) \) as:
\[
\begin{aligned}
u^{h}(s+t) &= \exp\left( \langle \hat{h}, L_{s+t} \rangle - \frac{s+t}{2} \| h \|_H^2 \right) \\
&= \exp\left( \langle \hat{h}, L_s \rangle + \langle \hat{h}, L_{s+t} - L_s \rangle - \frac{s}{2} \| h \|_H^2 - \frac{t}{2} \| h \|_H^2 \right) \\
&= \exp\left( \langle \hat{h}, L_s \rangle - \frac{s}{2} \| h \|_H^2 \right) \cdot \exp\left( \langle \hat{h}, L_{s+t} - L_s \rangle - \frac{t}{2} \| h \|_H^2 \right) \\
&= u^h(s) \cdot S_{s,t} u^h(t) = U_{s,t} \left( u^h(s) \otimes u^h(t) \right).
\end{aligned}
\]
This verifies that \( u^h \) satisfies the multiplicativity condition required for a unit in the product system.

Next, we note that
\begin{align*}
\mathbb{E}\left[ u^h(t) \right]& = \mathbb{E}\left[ \exp\left( \langle \hat{h}, L_t \rangle - \frac{t}{2} \| h \|_H^2 \right) \right] = e^{ - \frac{t}{2} \| h \|_H^2 } \cdot \mathbb{E}\left[ e^{ \langle \hat{h}, L_t \rangle } \right] \\&= e^{ - \frac{t}{2} \| h \|_H^2 } \cdot e^{ \frac{t \| h \|_H^2 }{2} } = 1,
\end{align*}
because \( \mathbb{E}[e^{X}] = e^{\frac{\sigma^2}{2}}\) for \(X \sim N(0, \sigma^2)\).
To show that \( u^h(t) \) is a martingale with respect to the filtration \( \{ \mathcal{F}_t \}_{t > 0} \), we compute the conditional expectation
\(
\mathbb{E}\left[ u^h(t) \mid \mathcal{F}_s \right].
\)
Using the decomposition \( L_{s+t} = L_s + (L_{s+t} - L_s) \), we have:
\[
u^h(t) = u^h(s) \cdot \exp\left( \langle \hat{h}, L_{s+t} - L_s \rangle - \frac{t}{2} \| h \|_H^2 \right).
\]
Taking the conditional expectation with respect to \( \mathcal{F}_s \), and noting that \( \langle \hat{h}, L_{s+t} - L_s \rangle \) is independent of \( \mathcal{F}_s \) and is Gaussian with mean \( 0 \) and variance \( t Q(\hat{h}, \hat{h}) \), we have:
\[
\begin{aligned}
\mathbb{E}\left[ u^h(t) \mid \mathcal{F}_s \right] &= u^h(s) \cdot \mathbb{E}\left[ \exp\left( \langle \hat{h}, L_{s+t} - L_s \rangle - \frac{t}{2} \| h \|_H^2 \right) \right] \\
&= u^h(s) \cdot \exp\left(  \frac{t}{2} Q(\hat{h}, \hat{h})-\frac{t}{2} \| h \|_H^2 \right)= u^h(s) .
\end{aligned}\]
Therefore, \( \{ u^h(t) \}_{t > 0} \) satisfies the martingale property:
\[
\mathbb{E}\left[ u^h(t) \mid \mathcal{F}_s \right] = u^h(s), \quad \forall s < t.
\]
Thus, \( u^h \) is a unit of the product system \( E^{\bm{L}} \) and a martingale with respect to the filtration \( \{ \mathcal{F}_t \}_{t > 0} \).
\end{proof}
\begin{definition}
A unit of the form \( u^h = \{u^{h}(t)\}_{t > 0} \), \(h\in H\), will be referred to as a standard exponential unit of \( E^{\bm{L}} \).
\end{definition}
\begin{theorem}\label{main1}
Let \( \bm{L} = \{L_t\}_{t > 0} \) be a purely Gaussian L\'evy process in a real, separable Banach space \( B \). Then any unit of \( E^{\bm{L}} \) is equivalent to a standard exponential unit.
\end{theorem}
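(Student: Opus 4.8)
The plan is to show that every unit is determined, up to the scalar equivalence, by its ``logarithmic covariance'' against the exponential units $u^{\phi}(t)=\exp(i\langle\phi,L_t\rangle)$, and that this covariance is forced to be an affine function of $\phi$ plus a universal Gaussian quadratic term; reading off its linear part then produces the required $h$. First I would normalize. For any two units $a,b$, unitarity of $U_{s,t}$ together with $a(s+t)=U_{s,t}(a(s)\otimes a(t))$ gives $\langle a(s+t),b(s+t)\rangle=\langle a(s),b(s)\rangle\langle a(t),b(t)\rangle$, so the overlap is multiplicative in $t$ and, wherever nonvanishing, equals $e^{t\Gamma(a,b)}$ for a Hermitian kernel $\Gamma$. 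Taking $b=\mathbf 1$ (the constant unit) shows $\mathbb E[u(t)]=\langle\mathbf 1,u(t)\rangle$ is multiplicative, and it is nonzero: using the Wiener--chaos grading of $L^2(\Omega,\mathcal F_t,\mathbb P)$ and the fact that $U_{s,t}$ maps $\mathcal C_m\otimes\mathcal C_n$ into $\mathcal C_{m+n}$ (a product of chaos vectors built from independent increments stays homogeneous), the lowest chaos degree $d(t)$ of $u(t)$ satisfies $d(s+t)=d(s)+d(t)$; a nonnegative integer-valued additive function vanishes identically, so $\mathbb E[u(t)]=e^{\mu t}$. Replacing $u$ by the equivalent unit $v(t)=e^{-\mu t}u(t)$ I may assume $\mathbb E[v(t)]=1$; then $v(t)\to\mathbf 1$ in $L^2$ as $t\downarrow0$, so the overlaps $\langle u^{\phi}(t),v(t)\rangle$ tend to $1$, are everywhere nonvanishing, and define $\gamma(\phi):=\Gamma(u^{\phi},v)$.

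Next I would exploit that $\Gamma$ is conditionally positive definite: for $\sum_i c_i=0$ one has $\sum_{i,j}\bar c_ic_j\Gamma(a_i,a_j)=\lim_{t\downarrow0}t^{-1}\lVert\sum_ic_ia_i(t)\rVert^2\ge0$. The pure-Gaussian exponent gives the explicit values $\Gamma(u^{\phi},u^{\psi})=-\tfrac12 Q(\phi-\psi,\phi-\psi)$, $\Gamma(u^{\phi},\mathbf 1)=-\tfrac12 Q(\phi,\phi)$, and, after the normalization above, $\Gamma(\mathbf 1,v)=0$. Conditioning at the base point $\mathbf 1$ produces the positive-definite kernel $D(a,b)=\Gamma(a,b)-\Gamma(a,\mathbf 1)-\overline{\Gamma(b,\mathbf 1)}$, whose Kolmogorov decomposition embeds the units in a Hilbert space $\mathcal H$ via $a\mapsto\xi_a$ with $\langle\xi_a,\xi_b\rangle_{\mathcal H}=D(a,b)$. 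A short computation then yields $\langle\xi_{u^{\phi}},\xi_{u^{\psi}}\rangle_{\mathcal H}=Q(\phi,\psi)$, so $\phi\mapsto\xi_{u^{\phi}}$ is a linear isometry of $B^{*}$ (with the inner product $Q=\langle\cdot,\cdot\rangle_{L^2(\mu_1)}$) into $\mathcal H$; the exponential units thus realize the reproducing-kernel space $B^{*}_{\mu_1}$ as the one-particle space.

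The crux is then immediate in this language, and it is where the genuine obstacle lies. Computing $D(u^{\phi},v)=\gamma(\phi)+\tfrac12 Q(\phi,\phi)=\langle\xi_{u^{\phi}},\xi_v\rangle_{\mathcal H}$, I observe that, since $\phi\mapsto\xi_{u^{\phi}}$ is linear and $\xi_v$ is a fixed vector, the right-hand side is automatically a bounded linear functional of $\phi$, with $\lvert\gamma(\phi)+\tfrac12 Q(\phi,\phi)\rvert\le\lVert\xi_v\rVert_{\mathcal H}\,Q(\phi,\phi)^{1/2}$. By Riesz representation in $B^{*}_{\mu_1}$ (using non-degeneracy of $Q$) there is a unique $\hat h$, in the complexification, with $\gamma(\phi)=-iQ(\hat h,\phi)-\tfrac12 Q(\phi,\phi)$, which is exactly the value $\Gamma(u^{\phi},u^{h})$ attached to the standard exponential unit $u^{h}$. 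Hence $\langle u^{\phi}(\tau),v(\tau)\rangle=\langle u^{\phi}(\tau),u^{h}(\tau)\rangle$ for all $\tau>0$ and all $\phi$; iterating the unit relation over a partition of $[0,t]$ shows the two sides agree against every cylindrical product $\prod_j\exp(i\langle\phi_j,L_{t_{j-1},t_j}\rangle)$, and such products are total in $L^2(\Omega,\mathcal F_t,\mathbb P)$ by the cylindrical density lemma together with Fourier density for the fully supported Gaussian marginals (as in the proof of Theorem \ref{Th1}). Therefore $v(t)=u^{h}(t)$ and $u$ is equivalent to $u^{h}$. The hard part is precisely this linearity step: the abstract product-system axioms alone give only conditional positive definiteness, and it is the Gaussian identity $\Gamma(u^{\phi},u^{\psi})=-\tfrac12 Q(\phi-\psi,\phi-\psi)$ — which forces the $u^{\phi}$ to exhaust the one-particle space — that kills every contribution to the overlap beyond the linear term. (Alternatively, under the identification \eqref{oper} of the fibre with $L^2(B,\mu_t)$ one may write $v(t)=\rho_t(L_t)$ and reduce directly to the multiplicative Cauchy equation $\rho_{s+t}(x+y)=\rho_s(x)\rho_t(y)$, whose measurable solutions on a fully supported Gaussian space are again exponentials.)
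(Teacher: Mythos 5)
Your proposal is correct in substance, but it follows a genuinely different route from the paper's. The paper argues pathwise: writing \(u(t)=F_t(L_t)\), it converts the unit relation into the statement that the measures \(\nu_t=F_t\,d\mu_t\) form a convolution semigroup, identifies each \(\nu_t\) as a Cameron--Martin shift \(\mu_t(\cdot-th)\), and reads off \(F_t\) from the Cameron--Martin formula. You instead work entirely with Arveson's covariance function: multiplicativity of overlaps; your chaos-grading argument that the minimal Wiener-chaos degree of \(u(t)\) is additive in \(t\), hence zero, so \(\mathbb{E}[u(t)]\neq 0\) (a nice self-contained substitute for the general theory, and valid here precisely because the process is purely Gaussian, so \(U_{s,t}\) respects the bigraded chaos decomposition over independent increments); conditional positive definiteness and the Kolmogorov decomposition, in which \(\phi\mapsto\xi_{u^\phi}\) is a \(Q\)-isometry; Riesz representation to extract \(\hat h\); and a totality argument over partitions (the cylindrical density lemma combined with the Fourier-density argument of Theorem \ref{Th1}, using the standing full-support assumption) to upgrade equality of single-time overlaps to equality of units. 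The last step is genuinely needed, since matching against \(\sigma(L_t)\)-measurable exponentials alone would not determine an element of \(L^2(\Omega,\mathcal{F}_t,\mathbb{P})\), and you handle it correctly. Your route trades the paper's brief but delicate disintegration manipulations (the conditional law \(d\mathbb{P}_{L_s\mid L_{s+t}=x}\) written as a ratio of measures is only formal in infinite dimensions) for soft Hilbert-space arguments, and it isolates the Gaussian input in the single identity \(\Gamma(u^\phi,u^\psi)=-\tfrac12 Q(\phi-\psi,\phi-\psi)\).

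One caveat, which you flag but then paper over: your Riesz representer \(\hat h\) lives in the \emph{complexification} of \(B^*_{\mu_1}\), whereas the paper's standard exponential units \(u^h\) have \(h\in H\) real, and your final sentence silently identifies the two. This cannot be repaired within the statement as written: the normalized unit \(v(t)=e^{tQ(\phi,\phi)/2}e^{i\langle\phi,L_t\rangle}\) has \(\hat h=i\phi\) purely imaginary, and it is equivalent to no \(u^h\) with real \(h\) (equivalence preserves the fact that \(|v(t)|\) is deterministic, which fails for \(u^h\) unless \(h=0\)). So what your argument actually proves --- and all that is true --- is that every unit has the form \(e^{\lambda t}\exp\left(\langle \hat h, L_t\rangle - \tfrac{t}{2}Q(\hat h,\hat h)\right)\) with \(\hat h\) in the complexified reproducing-kernel space, i.e.\ a product of a standard exponential unit with some \(u^\phi\). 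It is worth noting that the paper's own proof has exactly the same defect, concealed at the step where the Cameron--Martin characterization is applied to \(\nu_t\): for complex-valued \(F_t\) (e.g.\ \(F_t=e^{i\langle\phi,\cdot\rangle}\)) the measure \(\nu_t\) is complex and is not a shifted Gaussian, so the existence of a real \(h_t\in H_t\) with \(\nu_t=\mu_t(\cdot-h_t)\) fails there too. Your approach has the merit of making the complex linear part visible rather than hiding it; modulo this shared complexification point, on which your version is the more honest of the two, your argument is complete.
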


\begin{proof}
Let \( u = \{u(t)\}_{t > 0} \) be a unit in \( E^{\bm{L}} \). By rescaling the unit \(u\) if necessary, we may assume, without loss of generality,  that \(\|u(t)\|_{L^2}\neq1\) for some, and hence all, \(t>0\). Under this condition, we show that there exists \(h\in H\) such that \(u=u^h\).

The unit property implies that  all \( s, t > 0 \) and almost every \( \omega \in \Omega \), we have \(
u(s + t)(\omega) = u(s)(\omega)\cdot u(t)(\sigma_s(\omega))
\). 

Since \( L_t \) generates the \(\sigma\)-algebra \( \mathcal{F}_t \), for each \( t > 0 \), there exists a measurable function \( F_t: B \to \mathbb{C} \) such that:
\begin{eqnarray}\label{h1}
u(t)(\omega) = F_t(L_t(\omega)), \quad \forall \omega \in \Omega.
\end{eqnarray}
Moreover, since \( u(t) \in L^2(\Omega, \mathcal{F}_t, \mathbb{P}) \) and \( L_t \) has distribution \( \mu_t \), it follows that \( F_t \in L^2(B, \mu_t) \). Substituting (\ref{h1}) into the unit property, we have:
\[
F_{s + t}(L_{s + t}(\omega)) = F_s(L_s(\omega)) \cdot F_t(L_{t}(\sigma_s(\omega)))= F_s(L_s(\omega)) \cdot F_t(L_{s+t}(\omega)-L_s(\omega))
\] almost surely for \(t\geq 0\). Hence \(F_{s+t}(x)=F_s(L_s)\cdot F_t(x-L_s)\), where $x=L_{s+t}$.  Thus, for any \( x \in B \), taking the conditional expectation given \(L_{s+t}=x\), we obtain  \(F_{s+t}(x)=\mathbb{E}[F_s(L_s)\cdot F_t(x-L_s)\,\mid\,L_{s+t}=x]\). 

Using the law of total expectation, we decompose the expectation by further conditioning on \(L_s\): \[F_{s+t}(x)=\int_B\mathbb{E}\left[F_s(L_s)\cdot F_t(x-L_s)\,\mid\,L_s=y, L_{s+t}=x \right]d\mathbb{P}_{L_s\mid L_{s+t}=x}(y).\]  Since $L_s$ and $L_{s+t}-L_s$ are independent, and \( \mathbb{E}\left[F_s(L_s)\,|\,L_s=y \right]=F_s(y)\), \(\mathbb{E}\left[F_t(L_{s+t}-L_s)\,|\,L_s=y, L_{s+t}=x \right]=F_t(x-y)\), we have  \[F_{s+t}(x)=\int_BF_s(y)F_t(x-y)d\mathbb{P}_{L_s\mid L_{s+t}=x}(y).\] Furthermore, since \(\mu_{s+t}=\mu_s\ast\mu_t$, the conditional probabilility \(\mathbb{P}_{L_s\mid L_{s+t}=x}\) is given by 
\[d\mathbb{P}_{L_s \mid L_{s + t} = x}(y) = \frac{d\mu_s(y) \cdot d\mu_t(x - y)}{d\mu_{s + t}(x)}.\] Thus, if we define measures $\nu_t$ on $B$ by \[\nu_t(dx)=F_t(x)d\mu_t(x),\] 
the expression for \( F_{s + t}(x) \) becomes:
\[
\begin{aligned}
F_{s + t}(x) &= \int_B F_s(y) \cdot F_t(x - y) \, \frac{d\mu_s(y) \cdot d\mu_t(x - y)}{d\mu_{s + t}(x)} \\
&= \frac{1}{d\mu_{s + t}(x)} \int_B F_s(y)\cdot F_t(x - y) \, d\mu_s(y)  \, d\mu_t(x - y) \\
&= \frac{(\nu_s * \nu_t)(dx)}{d\mu_{s + t}(x)}.
\end{aligned}
\]
Therefore \(
\nu_{s + t}(dx) = F_{s + t}(x) \, d\mu_{s + t}(x) = (\nu_s * \nu_t)(dx)
\), i.e., the family \( \{ \nu_t \}_{t\geq 0} \) forms a convolution semigroup of measures on \( B \).

Since \( \mu_t \) is a centered Gaussian measure with covariance operator \( tQ \), the Cameron-Martin theorem characterizes the absolutely continuous measures with respect to \( \mu_t \) as those shifted by elements in the Cameron-Martin space \( H_t \). Specifically, for each \( t > 0 \), there exists \( h_t \in H_t \) such that:
\[
\nu_t(A) =\mu_t^{h_t}(A):= \mu_t(A - h_t),
\]
for all Borel sets \( A \subseteq B \). Given that \( \{ \nu_t \}_{t\geq 0} \) forms a convolution semigroup, we have:
\[
 \mu_{s + t}^{h_{s + t}}=\nu_{s + t} = \nu_s * \nu_t = \mu_s^{h_s} * \mu_t^{h_t} = \mu_{s + t}^{h_s + h_t}.
\]
Since Gaussian measures are uniquely determined by their means and covariances, and the covariance remains \( (s + t)Q \), we must have \(
h_{s + t} = h_s + h_t.
\)
This additive property implies that \( h_t \) is linear in \( t \), i.e., 
\(
h_t = t h,\) for some  \(h\in H.
\)

The Radon-Nikodym derivative of \( \nu_t = \mu_t^{t h} \) with respect to \( \mu_t \) is given by the Cameron-Martin formula:
\begin{eqnarray*}
\frac{d\nu_t}{d\mu_t}(x) &=& \exp\left( R_{\mu_t}^{-1}(th)(x) - \tfrac{1}{2} t^2 \| h \|_{H_{\mu_t}}^2 \right)=\exp\left(  R_{\mu_1}^{-1}(h)(x) - \tfrac{1}{2} t \| h \|_{H}^2 \right)\\&=&\exp\left( \langle \hat{h}, x\rangle - \tfrac{1}{2} t \| h \|_{H}^2 \right),
\end{eqnarray*} because \(R_{\mu_t}^{-1}(th)=tR_{\mu_t}^{-1}(h)=R_{\mu_1}^{-1}(h)=\hat{h}\). Thus \(
F_t(x) =\exp\left( \langle \hat{h}, x\rangle - \tfrac{1}{2} t \| h \|_{H}^2 \right)
\), so \( u(t) = F_t\comp L_t=\exp\left( \langle \hat{h}, L_t\rangle - \tfrac{1}{2} t \| h \|_{H}^2 \right)=u^h(t)\).

Thus, we have established that any normalized unit \( u = \{u(t)\}_{t > 0} \) in \( E^{\bm{L}} \) is equivalent to a standard exponential unit \( u^{h} \), for some \( h \in H \).
\end{proof}

We can readily extend the result of Theorem \ref{main1} to the case of Gaussian processes with non-zero drift.

\begin{corollary}\label{cor:extension}
Let \( \bm{L} = \{L_t\}_{t > 0} \) be a Gaussian L\'evy process in a real, separable Banach space \( B \), with drift \( b \in B \), non-degenerate covariance \( Q \), and zero L\'evy measure. Then any unit of the product system \( E^{\bm{L}} \) is equivalent to a standard exponential unit. 
\end{corollary}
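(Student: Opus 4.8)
The plan is to reduce the drifted case directly to the purely Gaussian case already settled in Theorem \ref{main1}. By the Lévy--Itô decomposition, write $L_t = bt + W_t$, where $\bm{W} = \{W_t\}_{t>0}$ is the purely Gaussian Lévy process with triplet $(0, Q, 0)$ obtained by deleting the deterministic drift. The guiding observation is that subtracting $bt$ is a deterministic, pathwise operation, so it should leave untouched all of the measurable structure used to build the product system.

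First I would verify that $E^{\bm{L}}$ and $E^{\bm{W}}$ coincide, not merely up to isomorphism, as product systems. Since $b$ is deterministic, $L_s = bs + W_s$ gives $\sigma\{L_s : s \le t\} = \sigma\{W_s : s \le t\}$, and likewise for the increment $\sigma$-algebras $\mathcal{F}_{s,t}$; hence the fibers $L^2(\Omega, \mathcal{F}_t, \mathbb{P})$ are literally the same Hilbert spaces for both processes. For the multiplication, I would check that the shift transformation is unchanged: the defining relation $L_t(\sigma_s(\omega)) = L_{s+t}(\omega) - L_s(\omega)$ reduces, after cancelling the deterministic term $bt$ from both sides, to $W_t(\sigma_s(\omega)) = W_{s+t}(\omega) - W_s(\omega)$, which is exactly the defining relation for the shift associated with $\bm{W}$. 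Consequently the Koopman operators $S_{s,t}$, and therefore the multiplication operators $U_{s,t}$ of \eqref{mult}, agree, so $E^{\bm{L}} = E^{\bm{W}}$ with identical product-system structure. Here $\mu_t^{\bm{W}}$ has full support precisely when $\mu_t^{\bm{L}}$ does, since the two supports differ by the translation $bt$, so the standing full-support hypothesis of Section \ref{sec:pureGauss} applies to $\bm{W}$.

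By Theorem \ref{main1} applied to $\bm{W}$, every unit of $E^{\bm{W}} = E^{\bm{L}}$ is equivalent to a standard exponential unit $u^h_{\bm{W}}(t) = \exp\!\big(\langle \hat{h}, W_t\rangle - \tfrac{t}{2}\|h\|_H^2\big)$ for some $h \in H$, where $H$ is the Cameron--Martin space of the covariance $Q$, which depends only on $Q$ and not on $b$. It then remains to reconcile this with the standard exponential unit of $E^{\bm{L}}$, namely $u^h_{\bm{L}}(t) = \exp\!\big(\langle \hat{h}, L_t\rangle - \tfrac{t}{2}\|h\|_H^2\big)$. Substituting $L_t = bt + W_t$ yields
\[
u^h_{\bm{L}}(t) = \exp\!\big(t\langle \hat{h}, b\rangle\big)\, u^h_{\bm{W}}(t) = e^{\lambda t}\, u^h_{\bm{W}}(t), \qquad \lambda = \langle \hat{h}, b\rangle,
\]
so $u^h_{\bm{L}}$ and $u^h_{\bm{W}}$ lie in the same equivalence class. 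By transitivity of the equivalence relation on units, any unit of $E^{\bm{L}}$ is equivalent to the standard exponential unit $u^h_{\bm{L}}$, which is the claim.

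The argument is essentially bookkeeping, and I do not anticipate a genuine obstacle. The one point demanding care is the assertion that the two product systems are \emph{identical} rather than merely isomorphic: one must confirm that the deterministic drift leaves the shift $\sigma_s$, and hence the entire multiplication $\{U_{s,t}\}$, untouched, so that units transfer verbatim. The second mild subtlety is the normalization bookkeeping in the displayed identity above --- tracking the deterministic factor $e^{t\langle \hat{h}, b\rangle}$ to confirm that it is exactly of the form $e^{\lambda t}$ and is therefore absorbed by the equivalence relation $u \sim e^{\lambda t} u$. Note that, unlike the centered case of Proposition \ref{Lem1}, $u^h_{\bm{L}}$ need no longer be a martingale, since $\mathbb{E}[u^h_{\bm{L}}(t)] = e^{t\langle \hat{h}, b\rangle}$, but this does not affect its status as a standard exponential unit.
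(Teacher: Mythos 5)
Your proposal is correct and follows essentially the same route as the paper: remove the drift via $L_t^0 = L_t - tb$, apply Theorem \ref{main1} to the centered process, and absorb the resulting deterministic factor $e^{t\langle \hat{h}, b\rangle}$ into the equivalence relation on units. Your explicit verification that the shift $\sigma_s$ and the multiplication $\{U_{s,t}\}$ are literally unchanged by the drift is a point the paper leaves implicit (stating it only in the proof of Theorem \ref{theorem:completely_spatial}), but it is the same reduction, carried out with the same bookkeeping.
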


\begin{proof}
We define a new process \( \bm{L}^0 = \{L_t^0\}_{t > 0} \) by subtracting the deterministic drift from \( \bm{L} \):
\[
L_t^0 = L_t - t b, \quad \forall t > 0.
\] Then \( \bm{L}^0 \) is a L\'evy process with L\'evy-It\^o triplet $(0, Q, 0)$. If \( u = \{u(t)\}_{t > 0} \) is a unit of \( E^{\bm{L}} \) then by the structure of the product system, we can express \( u(t) \) in terms of \( \bm{L}^0 \) as \(u(t)(\omega) = F_t(L_t(\omega)) = G_t(L_t^0(\omega)),\) where \( G_t: B \to \mathbb{C} \), \(G_t(x) = F_t(x + t b),\) for all \(x \in B.\)

Applying Theorem \ref{main1}, to \( \bm{L}^0 \), one can find $h\in H$ so that \(
G_t(x) = \exp\left( \langle \hat{h}, x \rangle - \tfrac{t}{2} \| {h} \|_H^2 \right),\) for all \(x \in B.
\) Therefore \[
u(t)= \exp\left( - t \langle \hat{h}, b \rangle \right)\cdot\exp\left( \langle\hat{h}, L_t\rangle - \tfrac{t}{2} \| h \|_H^2 \right),
\]  i.e., unit \( u = \{ u(t) \}_{t > 0} \) is equivalent to a standard exponential unit \( u^h \), for some \(h \in H \). This completes the proof of the corollary.\end{proof}
We are now ready to state and prove the main theorem of this section.
\begin{theorem}\label{theorem:completely_spatial}
Let \( \bm{L} = \{ L_t \}_{t > 0} \) be a Gaussian L\'evy process in a real, separable Banach space \( B \), with drift \( b \in B \), non-degenerate covariance \( Q \), and zero L\'evy measure. Then the associated product system \( E^{\bm{L}} \) is completely spatial.
\end{theorem}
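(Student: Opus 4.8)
The plan is to show that, for every $t>0$, the closed linear span of finite products of units coincides with the whole fibre $E^{\bm{L}}(t)=L^2(\Omega,\m{F}_t,\mathbb{P})$. Rather than invoking the structural description of units from Theorem \ref{main1} and Corollary \ref{cor:extension}, I would argue directly with the exponential units $u^\omega(t)=\exp(i\langle\omega,L_t\rangle)$, $\omega\in B^*$, which are units of $E^{\bm{L}}$ for \emph{every} $B$-valued L\'evy process; since complete spatiality is purely a density statement, these bounded units are the most convenient to work with. Observe that $L_t=bt+W_t$ is a translate of a centered, full-support Gaussian, so its law $\mu_t$ again has full support and the drift $b$ plays no role.

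The key point is that products of these units are exponentials of independent increments. Fixing a partition $0=s_0<s_1<\dots<s_k=t$ and functionals $\omega_1,\dots,\omega_k\in B^*$, repeated use of the multiplication \eqref{mult} and the shift $S_{s,t}$ yields
\[
u^{\omega_1}(s_1-s_0)\,u^{\omega_2}(s_2-s_1)\cdots u^{\omega_k}(s_k-s_{k-1})
=\exp\!\Big(i\sum_{j=1}^k\langle\omega_j,L_{s_{j-1},s_j}\rangle\Big),
\]
so the closed span $\m{D}(t)$ of all products of units contains every such increment-exponential. To deduce $\m{D}(t)=L^2(\Omega,\m{F}_t,\mathbb{P})$ I would invoke the density lemma: the cylindrical functions $\m{C}_{\bm{L}}(t)$ are dense in $L^2(\Omega,\m{F}_t,\mathbb{P})$, so it suffices to place each $\tau(L_{s_0,s_1},\dots,L_{s_{k-1},s_k})$ in $\m{D}(t)$. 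For a fixed partition the increments $L_{s_{j-1},s_j}$ are independent with full-support Gaussian laws $\mu_{s_j-s_{j-1}}$, so $L^2\big(B^k,\bigotimes_j\mu_{s_j-s_{j-1}}\big)\cong\bigotimes_j L^2(B,\mu_{s_j-s_{j-1}})$. The argument in the proof of Theorem \ref{Th1} uses only the full support of the law---its hypotheses on the L\'evy measure and on $Q$ are not actually needed there---so $\{\exp(i\langle\omega,\cdot\rangle):\omega\in B^*\}$ is dense in each factor; taking algebraic tensor products shows that the increment-exponentials span a dense subspace of the tensor product. Pulling back to $\Omega$ then puts $\tau(L_{s_0,s_1},\dots,L_{s_{k-1},s_k})$ in $\m{D}(t)$, whence $\m{D}(t)=L^2(\Omega,\m{F}_t,\mathbb{P})$ and $E^{\bm{L}}$ is completely spatial.

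I expect the density step to be the main obstacle, as it requires both recognising products of units as exponentials of the independent increments and upgrading the single-fibre density of Theorem \ref{Th1} to the product-measure setting attached to an arbitrary partition of $[0,t]$. By contrast, discarding the drift, computing the product of exponential units, and the tensor factorisation of $L^2$ over independent increments are all routine.
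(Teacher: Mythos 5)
Your proposal is correct, and it takes a genuinely different route from the paper's proof. The paper first classifies all units via Theorem \ref{main1} as the Cameron--Martin exponentials \(u^h\), \(h\in H\), transfers to \(L^2(B,\mu_t)\) through the identification \eqref{oper}, and proves density of \(\operatorname{span}\{\phi_h\}\) there by expanding cylindrical functions in Hermite polynomials and inverting the generating-function relation to write each \(H_\alpha\) as a finite linear combination of the \(\phi_{h_{z_k}}\). You avoid both ingredients: you use only the bounded units \(u^\omega(t)=\exp(i\langle\omega,L_t\rangle)\), \(\omega\in B^*\) (legitimate, since complete spatiality only asks for density of products of \emph{some} units), and you replace the Hermite machinery by the Fourier-type density of exponentials underlying Theorem \ref{Th1} --- correctly observing that this density needs none of the hypotheses (1)--(3) of that theorem; in fact even full support is dispensable, since a finite measure on \(\mathbb{R}^n\) with vanishing Fourier transform is zero, so only the reduction to finitely many functionals (separability of \(B\)) is really used. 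The more substantive difference is your partition step. The map \eqref{oper} is an isometry onto \(L^2(\Omega,\sigma(L_t)\vee\m{N},\mathbb{P})\), which is a \emph{proper} subspace of \(E^{\bm{L}}(t)=L^2(\Omega,\m{F}_t,\mathbb{P})\) (already for real Brownian motion, \(W_{t/2}\) is \(\m{F}_t\)-measurable but not a function of \(W_t\)); the paper's proof announces products \(u^{h_1}(t_1)\cdots u^{h_n}(t_n)\) but then works entirely inside \(L^2(B,\mu_t)\), so as written it establishes only that single-time units span the functions of \(L_t\). Your combination of the cylindrical density lemma, the factorization \(L^2\bigl(B^k,\bigotimes_j\mu_{s_j-s_{j-1}}\bigr)\cong\bigotimes_j L^2(B,\mu_{s_j-s_{j-1}})\) over independent increments, and factorwise density of exponentials is exactly the step needed to exhaust the full path \(\sigma\)-algebra \(\m{F}_t\), so your write-up is complete precisely where the paper is elliptical. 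What the paper's route buys in exchange is intrinsic information: via Theorem \ref{main1} the generating units are \emph{all} the units, parametrized by the Cameron--Martin space \(H\), which feeds directly into the index observation following the theorem; your argument, being insensitive to the Gaussian structure, is more elementary and applies verbatim to any L\'evy process for which the increment-exponential density holds, in the spirit of Theorem \ref{Th1}.
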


\begin{proof} First, we notice that the presence of a drift \( b \) does not affect the structure of the product system. Indeed, as in the proof of Corollary \ref{cor:extension}, consider the process \( \bm{L}^0\) obtained by removing the drift from  \( \bm{L}\). Then the Hilbert spaces \( E^{\bm{L}}(t)\)  and \( E^{\bm{L}^0}(t)\)  coincide because the shift by \( t b \) is deterministic and does not alter the \( L^2 \) structure. Thus, without loss of generality, we may assume \( b = 0 \)  in the remainder of the proof.

To prove that the product system \( E^{\bm{L}} \) is completely spatial, we must show that for each \( t > 0 \), the Hilbert space \( E(t) = L^2(\Omega, \mathcal{F}_t, \mathbb{P}) \) is densely spanned by finite products of units. Specifically, we aim to prove that the linear span of elements of the form
\(
u^{h_1}(t_1) u^{h_2}(t_2) \dots u^{h_n}(t_n),
\)
where \( t_1 + t_2 + \dots + t_n = t \) and \( h_i \in H \), is dense in \( E(t) \).
Recall from Theorem \ref{main1} that the units of \( E^{\bm{L}} \) are given by the standard exponential units \(u^h\), \(h\in H\). Under the unitary \(
L^2(B, \mu_t)\ni f  \mapsto f \circ L_t\in E(t)\), these units correspond to the functions:
\[
\phi_h(x) = \exp\left( \langle \hat{h}, x \rangle - \frac{t}{2} \| h \|_H^2 \right), \quad x \in B.
\]

Since cylindrical functions are dense in \( L^2(B, \mu_t) \), it suffices to approximate any cylindrical function using finite linear combinations of products of the exponential functions \( \phi_h(x) \). For this, let \( f \) be a cylindrical function on \( B \), meaning there exist \( n \in \mathbb{N} \), continuous linear functionals \( \psi_1, \psi_2, \dots, \psi_n \in B^* \), and a function \( F: \mathbb{R}^n \to \mathbb{C} \) such that
\[
f(x) = F\big( \psi_1(x), \psi_2(x), \dots, \psi_n(x) \big), \quad \forall x \in B.
\]
Assume that \( F \in L^2(\mathbb{R}^n, \gamma_n) \), where \( \gamma_n \) is the standard Gaussian measure on \( \mathbb{R}^n \) with mean zero and covariance matrix \( t \Sigma \), and \( \Sigma=(\Sigma_{ij})_{1\leq i,j\leq n} \) is the covariance matrix determined by \( \psi_i \) and the covariance \( Q \), i.e., \(
\Sigma_{ij} = Q(\psi_i, \psi_j).
\) Since the Hermite polynomials \( \{ H_\alpha \} \) form an orthogonal basis in \( L^2(\mathbb{R}^n, \gamma_n) \), the function \( F \) can be expanded as
\[
F(y) = \sum_{\alpha \in \mathbb{N}_0^n} c_\alpha H_\alpha(y),
\]
where \( \alpha = (\alpha_1, \dots, \alpha_n) \) is a multi-index, \( c_\alpha \in \mathbb{C} \), and \( H_\alpha(y) = \prod_{i=1}^n H_{\alpha_i}(y_i) \), for every \(y=(y_1,\cdots, y_n)\in\mathbb{R}^n.\)   Consequently,
\[
f(x) = \sum_{\alpha \in \mathbb{N}_0^n} c_\alpha H_\alpha\big( \psi_1(x), \dots, \psi_n(x) \big).
\]
Recall that the multivariate Hermite polynomials \( H_\alpha(y) \) have the following standard generating function:
\[
\exp\left( \langle z, y \rangle - \tfrac{1}{2} \| z \|^2 \right) = \sum_{\alpha \in \mathbb{N}_0^n} \frac{z^\alpha}{\alpha!} H_\alpha(y),
\]
where \( z \in \mathbb{R}^n \), \( y \in \mathbb{R}^n \), \( z^\alpha = \prod_{i=1}^n z_i^{\alpha_i} \), and \( \alpha! = \prod_{i=1}^n \alpha_i! \).
We claim that any multivariate Hermite polynomial \( H_\alpha(y) \) can be expressed as a finite linear combination of products of exponential functions \( \phi_h(x) \). For this purpose, for each \( z \in \mathbb{R}^n \), define \( h_z \in H \) such that
\[
\hat{h}_z = \sum_{i=1}^n z_i \psi_i.
\]
Then
\begin{eqnarray*}
\phi_{h_z}(x) &=& \exp\left( \langle \hat{h}_z, x \rangle - \tfrac{t}{2} \| h_z \|_H^2 \right) = \exp\left( \langle z, y \rangle - \tfrac{t}{2} \| h_z \|_H^2 \right)
\\&=& \exp\left(  z^\top y - \tfrac{t}{2} z^\top \Sigma z \right),\end{eqnarray*} where \( y = (\psi_1(x), \dots, \psi_n(x)) \).  To align with the standard generating function of Hermite polynomials (which assumes identity covariance), we perform a change of variables to normalize the covariance matrix. By defining
\[
\tilde{y} = \frac{1}{\sqrt{t}} \Sigma^{-1/2} y, \quad \tilde{z} = \sqrt{t} \Sigma^{1/2} z
\]
we then have \[
\phi_{h_z}(x) = \exp\left( \langle \tilde{z}, \tilde{y} \rangle - \tfrac{1}{2} \| \tilde{z} \|^2 \right) = \sum_{\alpha \in \mathbb{N}_0^n} \frac{\tilde{z}^\alpha}{\alpha!} H_\alpha(\tilde{y}).
\]
This expansion allows us to express Hermite polynomials in terms of exponential functions \( \phi_h(x) \). To express \( H_\alpha(\tilde{y}) \) in terms of \( \phi_h(x) \), we consider a finite set of vectors \( \tilde{z}_1, \dots, \tilde{z}_N \in \mathbb{R}^n \) and set up a linear system. For each \( \tilde{z}_k \), we have
\[
\phi_{h_{z_k}}(x) = \sum_{\beta \in \mathbb{N}_0^n} \frac{\tilde{z}_k^\beta}{\beta!} H_\beta(\tilde{y}).
\]
Collecting these equations for \( k = 1, \dots, N \), we obtain a linear system:
\[
\begin{pmatrix}
\phi_{h_{z_1}}(x) \\
\phi_{h_{z_2}}(x) \\
\vdots \\
\phi_{h_{z_N}}(x)
\end{pmatrix}
=
A
\begin{pmatrix}
H_{\beta_1}(\tilde{y}) \\
H_{\beta_2}(\tilde{y}) \\
\vdots \\
H_{\beta_M}(\tilde{y})
\end{pmatrix},
\]
where \( A \) is the \( N \times M \) matrix with entries:
\(
A_{k, \beta} = \frac{\tilde{z}_k^\beta}{\beta!}.
\) Here, \( M \) is the number of multi-indices \( \beta \) up to degree \( |\alpha| \).

If we choose \( N = M \), i.e., the number of exponential functions equals the number of Hermite polynomials up to degree \( |\alpha| \), and ensure that the matrix \( A \) is invertible, we can solve for \( H_\alpha(\tilde{y}) \):
\[
\begin{pmatrix}
H_{\beta_1}(\tilde{y}) \\
H_{\beta_2}(\tilde{y}) \\
\vdots \\
H_{\beta_M}(\tilde{y})
\end{pmatrix}
=
A^{-1}
\begin{pmatrix}
\phi_{h_{z_1}}(x) \\
\phi_{h_{z_2}}(x) \\
\vdots \\
\phi_{h_{z_N}}(x)
\end{pmatrix}.
\]
This expresses each Hermite polynomial \( H_\alpha(\tilde{y}) \) as a finite linear combination of exponential functions \( \phi_{h_{z_k}}(x) \).

To ensure that \( A \) is invertible, we need to choose \( \tilde{z}_k \) such that the matrix \( A \) has full rank. One effective strategy is to select \( \tilde{z}_k \) as distinct standard basis vectors and their combinations. For example, for \( n = 2 \) and degrees up to \( |\alpha| = 2 \), we might choose:
\[
\tilde{z}_1 = (1, 0), \quad \tilde{z}_2 = (0, 1), \quad \tilde{z}_3 = (1, 1).
\]
With these choices, the matrix \( A \) will be invertible, allowing us to solve the linear system.

Returning to the cylindrical function \( f(x) \), we can now express it as:
\[
f(x) = \sum_{\alpha \in \mathbb{N}_0^n} c_\alpha H_\alpha\left( \tilde{y} \right) = \sum_{\alpha \in \mathbb{N}_0^n} c_\alpha \sum_{k=1}^N \left( A^{-1} \right)_{\alpha, k} \phi_{h_{z_k}}(x).
\]
Thus, \( f(x) \) is expressed as a finite linear combination of exponential functions \( \phi_{h_{z_k}}(x) \).

Since any cylindrical function \( f \) can be approximated in \( L^2(B, \mu_t) \) by finite linear combinations of exponential functions \( \phi_h(x) \), and cylindrical functions are dense in \( L^2(B, \mu_t) \), it follows that the product system \( E^{\bm{L}} \) is completely spatial.
\end{proof}
\begin{observation}
Given that the Cameron-Martin space \(H\) is infinite dimensional, the Arveson index of \(E^{\bm{L}}\) is \(\infty\).
\end{observation}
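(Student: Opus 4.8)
The plan is to identify the set of units of \(E^{\bm{L}}\), modulo the equivalence relation \(\sim\), with the Cameron--Martin space \(H\), and then to read off the Arveson index from the covariance structure of the standard exponential units.

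First I would invoke Theorem \ref{main1} (together with Corollary \ref{cor:extension} if a drift is present): every unit of \(E^{\bm{L}}\) is equivalent to a standard exponential unit \(u^h\) for some \(h\in H\). This yields a surjection \(H \to \mathcal{U}(E^{\bm{L}})/\!\sim\), \(h \mapsto [u^h]\). To see that this map is injective I would use the non-degeneracy of \(Q\): if \(u^h \sim u^k\), then \(u^h(t)/u^k(t) = \exp(\langle \hat h - \hat k, L_t\rangle - \tfrac{t}{2}(\|h\|_H^2 - \|k\|_H^2))\) must equal a deterministic exponential \(e^{\lambda t}\), which forces the centered Gaussian variable \(\langle \hat h - \hat k, L_t\rangle\) to have zero variance \(t\,Q(\hat h - \hat k, \hat h - \hat k)\); non-degeneracy of \(Q\) then gives \(\hat h = \hat k\), hence \(h = k\). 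Thus \(h \mapsto [u^h]\) is a bijection of \(H\) onto \(\mathcal{U}(E^{\bm{L}})/\!\sim\).

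Next I would pin down the index through the covariance function of the units. A direct Gaussian computation, using \(\widehat{h+k} = \hat h + \hat k\) and \(Q(\hat h, \hat h) = \|h\|_H^2\), gives for all \(t>0\)
\[
\langle u^h(t), u^k(t)\rangle_{L^2} = \exp\big(t\,\langle h, k\rangle_H\big).
\]
Hence the covariance kernel of the units is \((h,k)\mapsto \langle h,k\rangle_H\), and the index Hilbert space of \(E^{\bm{L}}\)---obtained from this conditionally positive-definite kernel by choosing the normalized unit \(u^0\) as base point and completing the span of the tangent vectors \(u^h - u^0\) with inner product \(\langle h,k\rangle_H\)---is isometric to \(H\). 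Since \(H\) is infinite-dimensional, \(\operatorname{ind}(E^{\bm{L}}) = \dim H = \infty\). Equivalently, already the bijection \(H \cong \mathcal{U}(E^{\bm{L}})/\!\sim\) shows the quotient is infinite, so \(\operatorname{ind}(E^{\bm{L}}) = \infty\).

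The main obstacle is conceptual rather than computational: one must correctly match the parametrization of units by \(H\) with the notion of index, i.e. verify that distinct values of \(h\) genuinely contribute independent directions rather than merely distinct equivalence classes. This is exactly what the identity \(\langle u^h(t), u^k(t)\rangle_{L^2} = e^{t\langle h,k\rangle_H}\) guarantees, since it realizes \(\langle\cdot,\cdot\rangle_H\) as the inner product on the index space. Non-degeneracy of \(Q\) enters twice---once to establish injectivity of \(h\mapsto[u^h]\), and once, implicitly, to ensure that this inner product is positive-definite so that the index space is faithfully \(H\).
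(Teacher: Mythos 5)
Your argument is correct, and it is worth noting that the paper offers no explicit proof of this Observation at all: the intended justification is implicit, namely Theorem \ref{main1} (every unit is equivalent to some \(u^h\)) combined with the paper's cardinality-based definition \(\operatorname{ind}(E)+1=\#\left(\mathcal{U}(E)/\sim\right)\), so that the infinitude of \(H\) immediately forces \(\operatorname{ind}(E^{\bm{L}})=\infty\). Your first half reconstructs exactly this, and your injectivity step is a genuine improvement on what the paper leaves unsaid: Theorem \ref{main1} by itself gives only a \emph{surjection} \(h\mapsto[u^h]\) onto \(\mathcal{U}(E^{\bm{L}})/\!\sim\), and one still must rule out that distinct \(h,k\) collapse to one class; your observation that \(u^h\sim u^k\) forces the centered Gaussian \(\langle \hat h-\hat k, L_t\rangle\) to be deterministic, hence \(Q(\hat h-\hat k,\hat h-\hat k)=0\) and \(h=k\) by non-degeneracy, is precisely the missing verification. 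Your second half, however, takes a genuinely different route from the paper: the computation \(\langle u^h(t),u^k(t)\rangle_{L^2}=e^{t\langle h,k\rangle_H}\) (which is correct, by the same Gaussian moment identity used in Proposition \ref{Lem1}) identifies the covariance kernel of the units and hence the index space in \emph{Arveson's standard sense} (dimension of the Hilbert space built from the conditionally positive-definite kernel), a definition the paper never invokes. This buys robustness: it shows the conclusion does not depend on the paper's somewhat degenerate counting definition, under which any spatial system with two inequivalent units already has an infinite quotient, and it pins the index to \(\dim H\) rather than merely to ``infinite.'' The only imprecision is your claim that the index space is \emph{isometric} to \(H\): the full unit set is \(\{e^{\lambda t}u^h\}\) with kernel \(c((\lambda,h),(\mu,k))=\bar\lambda+\mu+\langle h,k\rangle_H\), \(H\) is a real Hilbert space while Arveson's index space is complex, and the GNS-type quotient construction should be carried out over this full parametrization; none of this affects the conclusion, since the resulting space is infinite-dimensional exactly when \(H\) is, but ``isometric to \(H\)'' is looser than what the construction literally yields.
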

\begin{remark} The results of this section may not generally hold if the non-degeneracy assumption on the covariance \(Q\) is removed, even when \(B\) is a Hilbert space. In purely Gaussian settings, the full support of \(L_t\) intrinsically requires \(Q\) to be non-degenerate. Without a non-degenerate covariance, the Gaussian measure  \( \mu_t \) is confined to a proper closed subspace of \(B\), thereby restricting the Cameron-Martin space \(H\) to this subspace. This limitation potentially allows for the existence of units that are not equivalent to standard exponential units, and the associated product system may fail to be completely spatial. We will address these issues elsewhere. \end{remark}

\section{A continuum of non-isomorphic product systems of type \(\rm{II}\sb{\infty}\)}\label{sec:examples}

In this section, we present a construction of a continuum of non-isomorphic, type \(\rm{II}\) product systems arising from \( B \)-valued L\'evy processes, where \( B \) is an infinite-dimensional Banach space. We show that by varying the L\'evy measures over an uncountable index set, the associated product systems exhibit structural differences that preclude isomorphism.

Let \( B = \ell^2(\mathbb{N}) \) with standard orthonormal basis \( \{e_n\}_{n=1}^\infty \). For each \( \lambda = (\lambda_1, \lambda_2, \dots) \in \Lambda:= [0,1]^\mathbb{N} \), we define the measure \( \nu_\lambda \) on \( B \) as:
\[
\nu_\lambda = \sum_{n=1}^\infty \alpha_n \left[ \lambda_n \delta_{e_n} + (1 - \lambda_n) \delta_{-e_n} \right],
\]
where \( \delta_{e_n} \) and  \( \delta_{-e_n} \) are the Dirac measures concentrated at the points \( e_n \) and \(- e_n \) in the Banach space \(B\), and \( \{\alpha_n\}_{n=1}^\infty \) is a sequence of positive weights satisfying \( \sum_{n=1}^\infty \alpha_n = 1 \) and \( \sum_{n=1}^\infty \alpha_n^2 < \infty \). We choose \( \alpha_n = 2^{-n} \) for convergence. Since \(\nu_\lambda(\{0\})=0\) and 
\[
\int _B\min(1, \|x\|^2)\nu_\lambda(dx)=\sum_{n=1}^\infty\alpha_n[\lambda_n\cdot 1+(1-\lambda_n)\cdot 1]=1,
\]
we conclude that \(\nu_\lambda\) is a L\'evy measure on \(B\).

For each \( \lambda \in \Lambda \), define the \( B \)-valued stochastic process \(\bm{X}^\lambda= \{X_t^\lambda\}_{t \geq 0} \) by
\[
X_t^\lambda = \sum_{n=1}^\infty \left[ N_n^+(t) - N_n^-(t) \right] e_n,
\]
where \( N_n^+(t) \) is a Poisson process with rate \( \mu_{1,n}=\alpha_n \lambda_n \) and \( N_n^-(t) \) is a Poisson process with rate \(\mu_{2,n}=\alpha_n (1 - \lambda_n) \).
The processes \( \{N_n^+(t), N_n^-(t)\}_{n=1}^\infty \) are independent across \(n\). 

It is clear that \(\bm{X}^\lambda\) is a L\'evy process. For example, to check stochastic continuity, we observe that the intensity of jumps is finite. Therefore, the probability of a jump in an infinitesimal interval is proportional to the interval length. As \(t\to s\), the probability of a jump exceeding any fixed \(\varepsilon >0\) tends to zero, ensuring that \(
\lim_{t\to s}\mathbb{P}(\|X_t^\lambda-X_s^\lambda\|>\varepsilon)=0.
\) 

We also observe that the L\'evy-It\^o triplet of  \(\{X_t^\lambda\}_{t \geq 0}\) is \[\left(\sum_{n=1}^\infty \alpha_n(2\lambda_n-1)e_n, 0, \nu_\lambda\right).\]
To determine the drift, we compute the expectation of \(X_t^\lambda\): 
\begin{align*}\mathbb{E}[X_t^\lambda]&=\sum_{n=1}^\infty \mathbb{E}\left[ N_n^+(t) - N_n^-(t) \right] e_n= \sum_{n=1}^\infty(\mu_{1,n}t-\mu_{2,n}t)e_n\\&= \sum_{n=1}^\infty\alpha_n(2\lambda_n-1)te_n,\end{align*}
so the drift vector is \(
b= \sum_{n=1}^\infty\alpha_n(2\lambda_n-1)e_n.\)

Next, we use the L\'evy-Khintchine formula to show that the L\'evy exponent  \( \Psi_{\bm{X}^\lambda}\) of \(\bm{X}^\lambda\) is given by \begin{align}\label{LK}\Psi_{\bm{X}^\lambda}(\phi)=\sum_{n=1}^\infty\alpha_n[(\cos \phi_n-1)+i(2\lambda_n-1)\sin \phi_n],\end{align} for all \(\phi\in B^*\), where \(\phi_n=  \langle  \phi, e_n \rangle\). Indeed, the drift term is\[ i \langle  \phi, b \rangle=i\langle  \phi, \sum_{n=1}^\infty\alpha_n(2\lambda_n-1)e_n \rangle=i\sum_{n=1}^\infty\phi_n\alpha_n(2\lambda_n-1),\] and the L\'evy measure integral is \begin{align*}
I&=\int_{B \setminus \{0\}} \left(e^{i \langle \phi, x \rangle} - 1 - i \langle \phi, x \rangle \mathbf{1}_{\|x\| \leq 1}\right) \nu_\lambda(dx)\\&=
\sum_{n=1}^\infty \alpha_n\left[\lambda_n(e^{i\phi_n}-1-i\phi_n)+(1-\lambda_n)(e^{i\phi_n}-1+i\phi_n) \right]\\&=
\sum_{n=1}^\infty \alpha_n\left[(\cos \phi_n-1)+i(2\lambda_n-1)(\sin \phi_n-\phi_n) \right].
\end{align*} Therefore \(\Psi_{\bm{X}^\lambda}(\phi)=i \langle  \phi, b \rangle+I=\sum_{n=1}^\infty\alpha_n[(\cos \phi_n-1)+i(2\lambda_n-1)\sin \phi_n\)], as claimed. 

Next, we consider the product system \( E^\lambda = \{E^\lambda(t)\}_{t > 0} \) associated with \(\bm{X}^\lambda\):
\[
E^\lambda(t) = L^2(\Omega, \mathcal{F}_t, \mathbb{P}^\lambda),
\]
where \( \Omega \) is the sample space, \( \mathcal{F}_t \) is the sigma-algebra up to time \( t \), and \( \mathbb{P}^\lambda \) is the probability measure induced by \( X_t^\lambda \). We then have the following theorem.
\begin{theorem}\label{type}
\(E^\lambda\) is a product system of type \(\rm{II}\) and infinite Arveson index, for each \(\lambda\in \Lambda\).
\end{theorem}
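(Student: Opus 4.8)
The plan is to read off the type and index of $E^\lambda$ from an explicit description of its units, using that the coordinates of $\bm{X}^\lambda$ are independent. Since $\mathbb{P}^\lambda=\bigotimes_n\mathbb{P}_n$, each fibre factors as an incomplete (von Neumann) tensor product $E^\lambda(t)=\bigotimes_n^{(1)}L^2(\Omega_n,\mathcal{F}_t^n,\mathbb{P}_n)$ taken along the constant sections $1$, where $\Omega_n$ carries the pair $(N_n^+,N_n^-)$; the $n$-th factor is the product system of the Skellam process $N_n^+-N_n^-$, which is $\mathbb{R}$-valued and hence of type I by Markiewicz. Computing the units of each factor directly, I would find them to be $u_n(t)=e^{c_nt}(z_n^+)^{N_n^+(t)}(z_n^-)^{N_n^-(t)}$ with $z_n^\pm\in\mathbb{C}$, and a product section $u=\bigotimes_n u_n$ defines a unit of $E^\lambda$ exactly when the infinite tensor product converges. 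A short computation with Campbell's formula for the Poisson factors reduces this to
\[
\sum_n\bigl[\mu_{1,n}\,|z_n^+-1|^2+\mu_{2,n}\,|z_n^--1|^2\bigr]<\infty .
\]

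Spatiality is then immediate from the exponential units $u^\phi(t)=\exp(i\langle\phi,X_t^\lambda\rangle)$, $\phi\in B^*=\ell^2$, which have modulus one and so lie in every $L^2(\Omega,\mathcal{F}_t,\mathbb{P}^\lambda)$. The first substantive observation is that the family of units is strictly larger than $\{u^\phi\}$: choosing $z_n^+=z_n^-=-1$ for every $n$ gives the global parity section $u(t)=(-1)^{N(t)}$ with $N(t)=\sum_n\bigl(N_n^+(t)+N_n^-(t)\bigr)$, which satisfies the convergence criterion above and is therefore a unit, yet cannot coincide with any $u^\phi$ because matching it coordinatewise would force $\phi_n\equiv\pi$, i.e. $\phi\notin\ell^2$. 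This is the infinite-dimensional analogue of the non-exponential unit in Example \ref{exm1}.

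To compute the Arveson index I would use the covariance function. Multiplicativity gives $\langle u(t),v(t)\rangle=e^{t\,c(u,v)}$, and factoring over $n$ yields
\[
c(u,v)=\sum_n\bigl[\mu_{1,n}(\overline{z_n^+}\,w_n^+-1)+\mu_{2,n}(\overline{z_n^-}\,w_n^--1)\bigr],
\]
where $v$ corresponds to $(w_n^\pm)$. Centering at the vacuum unit ($z_n^\pm\equiv1$) produces the positive-definite kernel
\[
q(u,v)=\sum_n\bigl[\mu_{1,n}(\overline{z_n^+}-1)(w_n^+-1)+\mu_{2,n}(\overline{z_n^-}-1)(w_n^--1)\bigr]=\langle\xi_u,\xi_v\rangle,
\]
which realizes each unit as a vector $\xi_u\in\ell^2(\{+,-\}\times\mathbb{N})$ with entries $\sqrt{\mu_{1,n}}\,(z_n^+-1)$ and $\sqrt{\mu_{2,n}}\,(z_n^--1)$. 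Perturbing a single coordinate $z_n^\pm$ while leaving the rest equal to $1$ produces units whose $\xi_u$ exhaust every basis direction, so the closed span of $\{\xi_u\}$ is all of $\ell^2(\{+,-\}\times\mathbb{N})$. The index space is therefore infinite-dimensional and $\operatorname{ind}(E^\lambda)=\infty$.

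The hard part is to show that $E^\lambda$ is \emph{not} completely spatial. I would do this by identifying the maximal type I subsystem $E_1\subseteq E^\lambda$ generated by the units and exhibiting a nonzero vector of $E^\lambda(t)$ orthogonal to every finite product $u_1(t_1)\cdots u_k(t_k)$ with $t_1+\dots+t_k=t$. Through the Poisson--It\^o chaos expansion one identifies $E^\lambda(t)$ with the symmetric Fock space over $L^2([0,t]\times M)$, where $M=\{+,-\}\times\mathbb{N}$ is the mark space, under which each unit becomes a coherent vector whose one-particle datum is constant in time and each product of units a coherent vector with piecewise-constant-in-time datum. The whole statement thus reduces to proving that the closed span of these piecewise-constant data is a \emph{proper} subspace of $L^2([0,t]\times M)$. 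I expect this to be the main obstacle: the piecewise-constant sections are naively dense, so ruling out complete spatiality must exploit, through a finer invariant of $E_1$, both the infinite-dimensionality of $B$ and the way $\nu_\lambda$ distributes its mass over the infinitely many orthogonal directions $\pm e_n$ — precisely the feature absent in the finite-dimensional setting of \cite{Mar0,Mar1} and in the non-degenerate Gaussian case of Theorem \ref{theorem:completely_spatial}.
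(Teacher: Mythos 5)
Your proposal handles the two easy halves of the statement correctly --- spatiality via the exponential units, and infinite index via the covariance kernel --- but it does not prove the half that gives the theorem its content, namely that \(E^\lambda\) is \emph{not} completely spatial; your final paragraph concedes exactly this. The gap is worse than a deferred lemma, because inside your own framework the missing claim comes out false. After the chaos identification of \(E^\lambda(t)\) with the symmetric Fock space over \(L^2([0,t]\times M,\,ds\otimes\nu_\lambda)\), your units are precisely the coherent vectors with time-constant one-particle data \(x(n,\pm)=z_n^\pm-1\in L^2(M,\nu_\lambda)\) (your summability criterion is exactly \(x\in L^2(\nu_\lambda)\), and since \(\nu_\lambda\) is a \emph{finite} measure, Cauchy--Schwarz upgrades it to the \(\ell^1\) control needed for the infinite tensor products to converge); products \(u_1(t_1)\cdots u_k(t_k)\) are coherent vectors with piecewise-constant data, piecewise-constant data are dense in \(L^2([0,t]\times M)\), and coherent vectors over a dense set are total. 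Each step of your unit construction does check out: \(u_n(t)=e^{c_nt}(z_n^+)^{N_n^+(t)}(z_n^-)^{N_n^-(t)}\) is square-integrable, multiplicative by independence of increments, and \(\mathcal{F}_t\)-measurable because \(N_n^\pm\) are recovered from the path of \(\bm{X}^\lambda\) (the jumps \(\pm e_n\) are distinct and a.s.\ never simultaneous). So if your description of the units is right, they are total in every fibre and your route terminates at type \(\mathrm{I}_\infty\), the negation of the theorem; no ``finer invariant'' of the type I subsystem can overturn a totality statement. A proposal whose setup entails the opposite of the goal is not a proof with a missing step --- it is a different conclusion.

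This is also exactly where you and the paper part ways, and you should confront the divergence rather than defer it. The paper closes the gap through the classification \eqref{unitP}: it asserts that every unit of the \(n\)-th Skellam factor is a unimodular character \(\exp(if_nX_{n,t}^\lambda)\) with \(f_n\in\mathbb{R}\) (via a group-character argument), and, using the infinite tensor product factorization of units as in \cite{Flor}, that every unit of \(E^\lambda\) is \(u_f^\lambda\) with \(f\in B^*\) of \emph{finite} support; it then exhibits \(\psi=\bigotimes_n\phi_n\), built from centered indicators of \(\{X_{n,t}^\lambda=0\}\), orthogonal to all such units. Your family contradicts both assertions: already \(u_n(t)=e^{-\mu_{2,n}t(z-1)}z^{N_n^-(t)}\) with \(|z|\neq1\) is a unit of the factor system not of the form \eqref{unitP} --- the character argument presupposes that a unit is a modulus-one function of \(X_{n,t}^\lambda\), which the definition of a unit nowhere requires --- and your parity unit has \(f_n=\pi\) for \emph{every} \(n\) yet defines a convergent infinite tensor product since \(\sum_n\bigl(1-e^{-2\alpha_n t}\bigr)<\infty\), so the finite-support restriction fails as well. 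To have a complete argument you must either find an error in your extra units (I do not see one) or in the intertwining of the chaos decomposition with the multiplications \(U_{s,t}\); absent that, your write-up, read to its end, is evidence against the paper's classification of units --- and hence against its orthogonality argument --- rather than a proof of Theorem \ref{type}.
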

\begin{proof}

 For each \(\lambda\in \Lambda \) and \(n\in\mathbb{N}\), define the process \[ 
 X_{n,t}^\lambda = N_n^+(t) - N_n^-(t),\quad t\geq 0. 
 \]  Then \(\bm{X}_n^\lambda=\{X_{n,t}^\lambda\}_{t\geq 0}\) is a \(\mathbb{Z}\)-valued L\'evy process, as it has independent and stationary increments, and each random variable \(X_{n,t}^\lambda\) follows a Skellam distribution with parameters \(\mu_{1,n}t\) and \(\mu_{2,n}t\) \cite{Skellam}. 
 
 Let  \(E^\lambda_n = \{E^\lambda_n(t)\}_{t > 0}\) be the product system associated with  \(\bm{X}_n^\lambda\):\[E_n^\lambda(t)=L^2(\Omega_n, \m{F}_{n,t}, \mathbb{P}_n^\lambda),\] where \(\Omega_n\) is the sample space associated with the \(n\)-th coordinate process \(X_{n,t}^\lambda\), \(\m{F}_{n,t}\) is the \(\sigma\)-algebra up to time \(t\), and \(\mathbb{P}_n^\lambda\) is the probability measure induced by \(X_{n,t}^\lambda\).  Since the Skellam distribution has infinite support, the Hilbert space \(E_n^\lambda(t)\) is infinite dimensional.

We observe that every unit \( u_n^\lambda=\{u_n^\lambda(t)\}_{t>0}\) of the product system \(E^\lambda_n\) must be exponential unit of the form \begin{eqnarray}\label{unitP}u_n^\lambda(t)=\exp(if_nX_{n,t}^\lambda),\end{eqnarray}
 for some \(f_n\in \mathbb{R}\), because units in \(E^\lambda_n\) correspond to one-dimensional representations of the additive group generated by \(X_{n,t}^\lambda\), and the only continuous one-dimensional representations of \(\mathbb{Z}\) are characters of the form  \(x\mapsto e^{if_nx}\), for some \(f_n\in \mathbb{R}\).

 Next, since  \( X_{n,t}^\lambda\) are independent, each Hilbert space \(E^\lambda(t)\) can be realized as:  \[ E^\lambda(t) = \bigotimes_{n=1}^\infty E_n^\lambda(t):=\limind\left( \bigotimes_{n=1}^k E_n^\lambda(t) \right), \] where  the infinite tensor product of Hilbert space is realized with respect to the reference vectors \(\{\bm{1}_n^\lambda(t)\}_{n=1}^\infty\), where \(\bm{1}_n^\lambda(t)\in E_n^\lambda(t) \) is the constant function 1. In other words, if \(V_{k,k+1}^\lambda(t):\bigotimes_{n=1}^k E_n^\lambda(t)\to\bigotimes_{n=1}^{k+1} E_n^\lambda(t)\) is the isometry \(V_{k,k+1}^\lambda(t)(\phi_1\otimes \phi_2\otimes\cdots \otimes \phi_k)=\phi_1\otimes \phi_2\otimes\cdots \otimes \phi_k\otimes \bm{1}_{k+1}^\lambda(t)\) then \(\left(\bigotimes_{n=1}^{k} E_n^\lambda(t), V_{k,k+1}^\lambda(t)\right)\) is an inductive limit of Hilbert spaces and \(\bigcup_k  \bigotimes_{n=1}^{k} E_n^\lambda(t)=L^2(\times_{n=1}^k\Omega_n, \otimes_{k=1}^n\m{F}_{n,t}, \times_{k=1}^n\mathbb{P}_n^\lambda)\) is dense in  \(E^\lambda(t)\) by the martingale convergence theorem in \(L^2\). Arguing as in \cite[Theorem 4.2]{Flor}), we deduce that for every unit \(u^\lambda\) of the product system \(E^\lambda\), there exists a sequence \(\{u_n^\lambda\}_{n\in\mathbb{N}}\)  of units \(u_n^\lambda\) of the product systems \(E^\lambda_n\) such that \[ u^\lambda(t)=\bigotimes_{n=1}^\infty u_n^\lambda(t),\quad t>0.\]
 
 We then infer  from (\ref{unitP}) that units in \( E^\lambda(t) \) are constructed from exponential vectors dependent on finite linear functionals \( f \in B^* \). Specifically, units are of the form:
\[u_f^\lambda(t) = \exp\left( i \sum_{n=1}^\infty f_n X_{n,t}^\lambda \right),\]
 where \( f = (f_n)\in B^* \) has only finitely many non-zero components.

However, for any \(t>0\), consider a function \( \psi \in E^\lambda(t) \) defined by \[
 \psi = \bigotimes_{n=1}^\infty \phi_n,\] where each \( \phi_n \) is chosen such that \[
\int \phi_n e^{i f_n X_{n,t}^\lambda} \, d\mathbb{P}_n^\lambda = 0 \quad \forall f_n \in \mathbb{C}.
\] For example, \(\phi_n\) can be the centered indicator function of a non-trivial event in \(E^\lambda(t)\), e.g., \[\phi_n(x) =\left\{\begin{array}{ll}1-p_n, & x=0\\-p_n,&x\neq 0\end{array}\right.\] where \(p_n=\mathbb{P}_n^\lambda(X_{n,t}^\lambda=0).\) Then for any unit \(u_f^\lambda\) of \(E^\lambda\),  where \( f = (f_n)\in B^* \) has only finitely many non-zero components, we have \[  \langle \psi, u_f^\lambda(t) \rangle=\prod_{n=1}^\infty
\int \phi_n e^{i f_n X_{n,t}^\lambda} \, d\mathbb{P}_n^\lambda =0.\]
This implies that \( \psi \) is orthogonal to all finite linear combinations of units. Therefore, there exists \( \psi \in E^\lambda(t) \) orthogonal to the span of units, showing that units do not generate \( E^\lambda(t) \). Finally, we notice that the dimension of the space of units is infinite, as there are countably many finite linear functionals \(
f\) with finite support. The theorem is proved.
\end{proof}
\begin{theorem}\label{last}
The product systems  \( E^\lambda \) and \( E^{\lambda'} \) are not isomorphic for \( \lambda \neq \lambda' \).
\end{theorem}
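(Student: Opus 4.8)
The plan is to extract from the product system a numerical invariant carried by its units and to show that this invariant recovers $\lambda$. Recall that for any two units $u,v$ of a product system, the multiplicativity $u(s+t)=U_{s,t}(u(s)\otimes u(t))$ together with the unitarity of $U_{s,t}$ forces $\langle u(s+t),v(s+t)\rangle=\langle u(s),v(s)\rangle\,\langle u(t),v(t)\rangle$, so that $\langle u(t),v(t)\rangle=e^{t\,c(u,v)}$ for a unique covariance function $c\colon\mathcal{U}(E)\times\mathcal{U}(E)\to\mathbb{C}$. Any isomorphism $\theta=\{\theta_t\}\colon E^\lambda\to E^{\lambda'}$ carries units to units and, each $\theta_t$ being unitary, preserves inner products of sections; hence it induces a bijection $\mathcal{U}(E^\lambda)\to\mathcal{U}(E^{\lambda'})$ with $c^{\lambda'}(\theta u,\theta v)=c^\lambda(u,v)$. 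Since I only need a \emph{necessary} condition for isomorphism, it suffices to show that the abstract kernel $c^\lambda$ determines $\lambda$.

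First I would compute $c^\lambda$. By Theorem \ref{type} every normalized unit is, up to the scalar equivalence $u\sim e^{zt}u$, of the form $u^\lambda_f(t)=\exp\!\big(i\sum_n f_n X^\lambda_{n,t}\big)$ with $f=(f_n)\in B^*$ finitely supported. Using independence across $n$ and the Skellam characteristic function $\mathbb{E}\big[e^{i\theta X^\lambda_{n,t}}\big]=\exp\!\big(t\,\alpha_n[\lambda_n(e^{i\theta}-1)+(1-\lambda_n)(e^{-i\theta}-1)]\big)$, one obtains
\[
c^\lambda(u_f,u_g)=\sum_{n}\alpha_n\Big[\big(\cos(g_n-f_n)-1\big)+i(2\lambda_n-1)\sin(g_n-f_n)\Big].
\]
I would then split this into its symmetric and antisymmetric parts. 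The symmetric part $\tfrac12\big(c^\lambda(u_f,u_g)+c^\lambda(u_g,u_f)\big)=\sum_n\alpha_n(\cos(g_n-f_n)-1)$ is independent of $\lambda$; it is a conditionally negative-definite kernel whose associated metric realizes the set of units (modulo phase) as a restricted product of circles whose sizes are governed by the weights $\{\alpha_n\}$. The antisymmetric part $\tfrac1{2i}\big(c^\lambda(u_f,u_g)-c^\lambda(u_g,u_f)\big)=\sum_n\alpha_n(2\lambda_n-1)\sin(g_n-f_n)$ carries all of the $\lambda$-dependence.

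The recovery step then proceeds as follows. From the metric (the symmetric part) one reads off $\{\alpha_n\}$; since $\alpha_n=2^{-n}$ are pairwise distinct, the circle factors have pairwise distinct sizes and are therefore singled out intrinsically by the metric geometry, so a covariance-preserving bijection must respect the coordinate decomposition. Restricting the antisymmetric form to the $n$-th circle factor yields $\alpha_n(2\lambda_n-1)$, and dividing by $\alpha_n$ recovers $2\lambda_n-1$. Matching these quantities for $E^\lambda$ and $E^{\lambda'}$ then constrains the two parameters.

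I expect two points to require the most care. The first is the canonicity of the coordinate decomposition: an abstract covariance-preserving bijection need not respect the $f$-parametrization, and the argument must genuinely use the distinctness of the $\alpha_n$, together with the fact that $c^\lambda=\sum_n c^\lambda_n$ splits over the independent tensor factors $E^\lambda_n$ (as in the decomposition of units via \cite[Theorem 4.2]{Flor}), to pin down each factor. The decisive difficulty, however, is a genuine symmetry of the construction: the antisymmetric part determines only $|2\lambda_n-1|$, because the path-negation $\omega\mapsto-\omega$ is measure preserving from $\mathbb{P}^\lambda$ to $\mathbb{P}^{\lambda'}$, commutes with the shifts $\sigma_s$, and fixes the reference vectors, hence induces a product-system isomorphism $E^\lambda\cong E^{\lambda'}$ whenever $\lambda'_n\in\{\lambda_n,1-\lambda_n\}$ for every $n$ (concretely it sends $u^\lambda_f\mapsto u^{\lambda'}_{-f}$). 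Thus the invariant extracted above is exactly the family of unordered pairs $\{\lambda_n,1-\lambda_n\}$ tagged by $\alpha_n$, and the statement is naturally read as: $E^\lambda\cong E^{\lambda'}$ precisely when $\lambda'_n\in\{\lambda_n,1-\lambda_n\}$ for all $n$. Restricting $\lambda$ to the fundamental domain $[0,\tfrac12]^{\mathbb{N}}$ removes this ambiguity and still leaves a continuum of mutually non-isomorphic type $\mathrm{II}_\infty$ product systems, which is the intended conclusion.
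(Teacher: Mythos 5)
Your computation is the same as the paper's: both arguments reduce the isomorphism question to matching inner products of exponential units, $\langle u_f^\lambda(t),u_g^\lambda(t)\rangle=\exp\bigl(t\,\Psi_{\bm{X}^\lambda}(f-g)\bigr)$, and then to the identity $\Psi_{\bm{X}^\lambda}(f)=\Psi_{\bm{X}^{\lambda'}}(\hat f)$ for the induced bijection $f\mapsto\hat f$ of (finitely supported) parameters, using \eqref{LK}. Where you diverge is at the recovery step, and there you are right and the paper is wrong. The paper concludes from this identity that $f_n-\hat f_n=2k\pi$ and $\lambda_n=\lambda_n'$ for all $n$, but the identity has other solutions: taking $\hat f_n\equiv -f_n \pmod{2\pi}$ and $\lambda_n'=1-\lambda_n$ matches both the real part $\sum_n\alpha_n(\cos f_n-1)$ (which is even) and the imaginary part, since $(2(1-\lambda_n)-1)\sin(-f_n)=(2\lambda_n-1)\sin f_n$. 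Your negation map makes this more than an algebraic coincidence: on the canonical path space, $\omega\mapsto-\omega$ (or negation of any fixed subset of coordinates, using the independence across $n$ and the tensor decomposition $E^\lambda(t)=\bigotimes_n E_n^\lambda(t)$) pushes $\mathbb{P}^\lambda$ to $\mathbb{P}^{\lambda'}$, preserves the filtration $\{\mathcal{F}_t\}$, commutes with the shifts $\sigma_s$, and fixes the constant functions, hence induces a genuine product-system isomorphism $E^\lambda\cong E^{\lambda'}$ whenever $\lambda_n'\in\{\lambda_n,1-\lambda_n\}$ for all $n$, sending $u_f^\lambda\mapsto u_{-f}^{\lambda'}$ coordinatewise. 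So Theorem \ref{last} as stated is false (e.g.\ $\lambda'=\mathbf{1}-\lambda\neq\lambda$), the paper's final step is the precise point of failure, and your corrected formulation --- the invariant is the family of unordered pairs $\{\lambda_n,1-\lambda_n\}$ tagged by $\alpha_n$, with non-isomorphism holding on the fundamental domain $[0,\tfrac12]^{\mathbb{N}}$, still a continuum of type $\mathrm{II}_\infty$ systems --- is the right repair of the headline claim.

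One caveat: the step you flag as delicate is indeed still open in your sketch, and it is a gap the paper shares silently. Both arguments need to know that an abstract covariance-preserving bijection of units acts coordinatewise up to the sign flips, i.e.\ that from $\sum_n\alpha_n[(\cos f_n-1)+i(2\lambda_n-1)\sin f_n]=\sum_n\alpha_n[(\cos\hat f_n-1)+i(2\lambda_n'-1)\sin\hat f_n]$, holding for each $f$ with an $f$-dependent $\hat f$, one may compare coefficients coordinate by coordinate. This does not follow from the identity alone; your plan --- exploit the two-variable identity $\Psi_{\bm{X}^\lambda}(f-g)=\Psi_{\bm{X}^{\lambda'}}(\hat f-\hat g)$, view the symmetric part as a metric on the restricted product of circles, and use the pairwise distinctness of the weights $\alpha_n=2^{-n}$ to show any base-point-preserving isometry respects the circle factors --- is plausible but would need to be carried out (note that $\theta_t$ need not preserve pointwise multiplication of units, so the character-group structure on $\{u_f\}$ is not obviously available). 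Modulo that one step, your proposal is not merely an alternative route: it is a correction of the paper's proof and statement.
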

\begin{proof}
We claim  that if the product systems  \( E^\lambda \) and \( E^{\lambda'} \) are isomorphic, then \( \lambda = \lambda' \). For this purpose, consider  an isomorphism of product systems \(\theta=\{\theta_t\}_{t>0}: E^\lambda \to E^{\lambda'} \). Thus, each \(\theta_t:E^\lambda(t)\to E^{\lambda'}(t)\) is a unitary operator, and they satisfy the compatibility relation \(U_{s,t}^{\lambda'}(\theta_s\otimes \theta_t)=\theta_{s+t}U_{s,t}^\lambda\) for all \(s,\,t>0\), where \(U_{s,t}^\lambda\) and \(U_{s,t}^{\lambda'}\) are the multiplications of these product systems. By normalizing if necessary, we can assume without loss of generality that \(\theta_t\bm{1}^\lambda(t) =\bm{1}^{\lambda'}(t)\), for all \(t>0\), where \(\bm{1}^\lambda(t)\in E^\lambda(t) \) and  \(\bm{1}^{\lambda'}(t)\in E^{\lambda'}(t) \) denote the constant function 1

Since isomorphisms of product systems preserve their units, we deduce from Theorem \ref{type} than for any \( f = \{f_n\}_{n\geq 1}\in B^*= \ell^2(\mathbb{N}),\) with finite support, there exists \(\hat{f} = \{\hat{f}\}_{n\geq 1}\in B^*= \ell^2(\mathbb{N})$ with finite support such that \(\theta_tu_f^\lambda(t)=u_{\hat{f}}^{\lambda'}(t)\), for all \(t>0\). Thus  \[\langle u_f^\lambda(t), u_g^\lambda(t) \rangle_{E^\lambda(t)}=\langle u_{\hat{f}}^{\lambda'}(t), u_{\hat{g}}^{\lambda'}(t) \rangle_{E^{\lambda'}(t)},\quad t>0\] for all \(f,\,g\in B^*\) having finite support. But 
\begin{align*}
\langle u_f^\lambda(t), u_g^\lambda(t) \rangle_{E^\lambda(t)}&=\mathbb{E}^\lambda\left[\exp(i\sum_{n=1}^\infty(f_n-g_n)X_{n, t}^\lambda )\right]=\mathbb{E}^\lambda\left[\exp(i\langle f-g, X_t^\lambda \rangle \right]\\&=\exp (t\Psi_{{\bm{X}}^\lambda} (f-g)),\end{align*} where 
\(\Psi_{\bm{X}^\lambda}\) is the L\'evy exponent of the process \(\bm{X}^\lambda\). 

Similarly, \(\langle u_{\hat{f}}^{\lambda'}(t), u_{\hat{g}}^{\lambda'}(t) \rangle_{E^{\lambda'}(t)}=\exp (t\psi_{{\bm{X}}^{\lambda'}} (\hat{f}-\hat{g}))\) so \(\psi_{\bm{X}^\lambda} (f-g)=\psi_{\bm{X}^{\lambda'}}(\hat{f}-\hat{g}).\) By taking \(g=0\), we obtain  \[
\psi_{\bm{X}^\lambda} (f)=\psi_{\bm{X}^{\lambda'}}(\hat{f}),
\]
for all \(f\in B^*\) with finite support. Thus, using (\ref{LK}), we have:
\[\sum_{n=1}^\infty\alpha_n[(\cos f_n-1)+i(2\lambda_n-1)\sin f_n]=\sum_{n=1}^\infty\alpha_n[(\cos \hat{f}_n-1)+i(2\lambda_n'-1)\sin \hat{f}_n].\]
Consequently, \(f_n-\hat{f}_n=2k\pi\) and \(\lambda_n=\lambda'_n\), for all \(n\in\mathbb{N}\), so \(\lambda =\lambda'\). The theorem has been proved.
\end{proof}

\begin{bibdiv}
\begin{biblist}

\bib{Applebaum}{book}{
   author={Applebaum, David},
   title={L\'{e}vy processes and stochastic calculus},
   series={Cambridge Studies in Advanced Mathematics},
   volume={116},
   edition={2},
   publisher={Cambridge University Press, Cambridge},
   date={2009},
   pages={xxx+460},
}

\bib{A2}{article}{
   author={Arveson, William},
   title={Continuous analogues of Fock space},
   journal={Mem. Amer. Math. Soc.},
   volume={80},
   date={1989},
   number={409},
   pages={iv+66},
}

\bib{A7}{book}{
   author={Arveson, William},
   title={Noncommutative dynamics and $E$-semigroups},
   series={Springer Monographs in Mathematics},
   publisher={Springer-Verlag, New York},
   date={2003},
   pages={x+434},
}

\bib{Bog1}{book}{
   author={Bogachev, Vladimir I.},
   title={Gaussian measures},
   series={Mathematical Surveys and Monographs},
   volume={62},
   publisher={American Mathematical Society, Providence, RI},
  date={1998},
   pages={xii+433},
   isbn={0-8218-1054-5},
   review={\MR{1642391}},
   doi={10.1090/surv/062},
}

\bib{Flor}{article}{
   author={Floricel, Remus},
   title={Infinite tensor products of spatial product systems},
   journal={Infin. Dimens. Anal. Quantum Probab. Relat. Top.},
   volume={11},
   date={2008},
   number={3},
   pages={447--465},
}

\bib{Liebscher}{article}{
   author={Liebscher, Volkmar},
   title={Random sets and invariants for (type II) continuous tensor product
   systems of Hilbert spaces},
   journal={Mem. Amer. Math. Soc.},
   volume={199},
   date={2009},
   number={930},
   pages={xiv+101},
}

\bib{Mar0}{book}{
   author={Markiewicz, Daniel },
   title={Completely positive semigroups and their product systems},
   note={Thesis (Ph.D.)--University of California, Berkeley},
   date={2002},
   pages={73},
}

\bib{Mar1}{article}{
   author={Markiewicz, Daniel},
   title={On the product system of a completely positive semigroup},
   journal={J. Funct. Anal.},
   volume={200},
   date={2003},
   number={1},
   pages={237--280},
}

\bib{Mey}{article}{
   author={Meyer, P.-A.},
   title={Les syst\`emes-produits et l'espace de Fock, d'apr\`es W. Arveson},
   language={French},
   conference={
      title={S\'{e}minaire de Probabilit\'{e}s, XXVII},
   },
   book={
      series={Lecture Notes in Math.},
      volume={1557},
      publisher={Springer, Berlin},
   },
   date={1993},
   pages={106--113},
}

\bib{Pow88}{article}{
   author={Powers, Robert T.},
   title={An index theory for semigroups of $^*$-endomorphisms of ${\scr
   B}({\scr H})$ and type ${\rm II}_1$ factors},
   journal={Canad. J. Math.},
   volume={40},
   date={1988},
   number={1},
   pages={86--114},
}

\bib{Sato1}{book}{
   author={Sato, Ken-iti},
   title={L\'{e}vy processes and infinitely divisible distributions},
   series={Cambridge Studies in Advanced Mathematics},
   volume={68},
   note={Translated from the 1990 Japanese original;
   Revised by the author},
   publisher={Cambridge University Press, Cambridge},
   date={1999},
   pages={xii+486},
}

\bib{Skeide}{article}{
   author={Skeide, Michael},
   title={L\'{e}vy processes and tensor product systems of Hilbert modules},
   conference={
      title={Quantum probability and infinite dimensional analysis},
   },
   book={
      series={QP--PQ: Quantum Probab. White Noise Anal.},
      volume={18},
      publisher={World Sci. Publ., Hackensack, NJ},
   },
   date={2005},
   pages={492--503},
}
	
\bib{Skellam}{article}{
   author={Skellam, J. G.},
   title={The frequency distribution of the difference between two Poisson
   variates belonging to different populations},
   journal={J. Roy. Statist. Soc. (N.S.)},
   volume={109},
   date={1946},
   pages={296},
}
	
\bib{Sundar}{article}{
   author={Shanmugasundaram, Sundar},
   title={Decomposable product systems associated to non-stationary Poisson
   processes},
   journal={Int. Math. Res. Not. IMRN},
   date={2023},
   number={13},
   pages={11432--11452},
}

\bib{Tsirelson2000}{article}{
   author={Tsirelson, Boris},
   title={From random sets to continuous tensor products: answers to three questions of W. Arveson},
   journal={arXiv:math/0001070 },
   date={2000},
}

\bib{Tsirelson2002}{article}{
   author={Tsirelson, Boris},
   title={Non-isomorphic product systems},
   conference={
      title={Advances in quantum dynamics},
      address={South Hadley, MA},
      date={2002},
   },
   book={
      series={Contemp. Math.},
      volume={335},
      publisher={Amer. Math. Soc., Providence, RI},
   },
   date={2003},
   pages={273--328},
}
	
\bib{Tsirelson2004}{article}{
   author={Tsirelson, Boris},
   title={Nonclassical stochastic flows and continuous products},
   journal={Probab. Surv.},
   volume={1},
   date={2004},
   pages={173--298},
}

\end{biblist}
\end{bibdiv}

\end{document}